\documentclass[12pt,reqno]{amsart}
\usepackage{amsfonts,amssymb,amsmath}
\usepackage{datetime}
\usepackage{xcolor}
\usepackage{enumerate}
\usepackage{multirow}
\usepackage{float}
\makeatletter\@namedef{subjclassname@2020}{\textup{2020} Mathematics Subject Classification}
\makeatother

\definecolor{fgreen}{RGB}{44,144, 14}

\setlength{\textheight}{23cm} 
\setlength{\textwidth}{16cm} 
\setlength{\topmargin}{-0.8cm} 
\setlength{\parskip}{0.25\baselineskip} 
\renewenvironment{proof}{{\bfseries Proof.}}{\qed}

\hoffset=-1.4cm 
\numberwithin{equation}{section} 
\baselineskip=15.5pt

\newtheorem{theorem}{Theorem}[section] 
\newtheorem{proposition}[theorem]{Proposition} 
 
\newtheorem{lemma}[theorem]{Lemma} 

\theoremstyle{definition}
\newtheorem{definition}[theorem]{Definition} 
\newtheorem{remark}[theorem]{Remark}

\usepackage{cite}
\usepackage[colorlinks=true,citecolor=cyan,urlcolor=blue,linkcolor=blue]{hyperref}
%\usepackage{backref}
%\everymath{\displaystyle}
%==============================================================

\def\R{\mathbb R}

\def\C{\mathbb C}

\def\H{\mathbb H}

\def\z{{\bf z}}

\def\N{\mathbb N}
\def\Z{\mathbb Z}

\def\R{\mathbb R}
\def\e{\mathcal E}

\def\P{\mathbb P}

\def \y  {{\bf y}}

%%%%%%%%%%%%%%%%%%%%%%%%%%%%%%%%%%%%%%%%%%%%%%%%%%%%%%%%%%%%%%%%%%%%%%%%%%%%%%%%%%%%%
\def\R{\mathbb {R}}
\def\C{\mathbb {C}}
\def\N{\mathbb {N}}
\def\H{\mathbb {H}}
\def\Z{\mathbb {Z}}

\def\z{\mathfrak {z}}
\def\c{\mathfrak {c}}

\def\b{\mathfrak b}

\newcommand{\defref}[1]{Definition~\ref{#1}}
\newcommand{\secref}[1]{Section~\ref{#1}}

\newcommand{\lemref}[1]{Lemma~\ref{#1}}
\newcommand{\remref}[1]{Remark~\ref{#1}}

%+++++++++++++++++++++++++++++++++++++++++++++++++++++++++++

%%%%%%%%%%%%%%%%%%%%

\begin{document} 
	
\title[Limit sets of cyclic quaternionic Kleinian groups]{Limit sets of cyclic quaternionic Kleinian groups}
\author[S.  Dutta, K. Gongopadhyay and T. Lohan]{Sandipan Dutta, Krishnendu Gongopadhyay and 
	Tejbir Lohan}

\address{Indian Institute of Science Education and Research (IISER) Mohali,
	Knowledge City,  Sector 81, S.A.S. Nagar 140306, Punjab, India}
\email{sandipandutta98@gmail.com }

\address{Indian Institute of Science Education and Research (IISER) Mohali,
	Knowledge City,  Sector 81, S.A.S. Nagar 140306, Punjab, India}
\email{krishnendug@gmail.com, krishnendu@iisermohali.ac.in}

\address{Indian Institute of Science Education and Research (IISER) Mohali,
	Knowledge City,  Sector 81, S.A.S. Nagar 140306, Punjab, India}
\email{tejbirlohan70@gmail.com}

\subjclass[2020]{ Primary 20H10; Secondary 15B33, 22E40 }
\keywords{Quaternions, projective transformations,  Kleinian groups,  Kulkarni limit sets,  Conze-Guivarc'h limit sets}

%\date{\today}

\begin{abstract}
In this paper, we consider the natural action of ${\rm SL}(3, \H)$ on the quaternionic projective space $ \P_{\H}^2$. Under this action, we investigate limit sets for cyclic subgroups of ${\rm SL}(3, \H)$. We compute two types of limit sets, which were introduced by Kulkarni and Conze-Guivarc'h, respectively.
\end{abstract}

\maketitle 
\section{Introduction} 
\label{sec:intro}
Classically, the Kleinian groups arise as discrete subgroups of ${\rm SL}(2, \C)$ acting on the Riemann sphere $ \P_{\C}^1$, which is the boundary of the three-dimensional hyperbolic space. The action is in terms of the Möbius transformations. The Kleinian groups played a significant role in several areas of mathematics due to their algebraic, geometric, and dynamical significance, see \cite{ma1}, \cite{ma2}. The limit sets are the starting points toward understanding the Kleinian group’s dynamics. A Kleinian group acts properly discontinuously on the complement of its limit set. In the classical setup, the structure of the limit sets analogously related to the holomorphic dynamics of iterated functions of the Riemann sphere via the Sullivan dictionary, cf. \cite{mc}.  
In an attempt to generalise the Sullivan dictionary in the higher dimension, Seade and Verjovsky \cite{SV01,SV02,SV03} initiated the investigation of limit sets of discrete subgroups of ${\rm SL}(3, \C)$ that act on the two-dimensional complex projective space $\P^2_{\C}$. The theory of complex Kleinian group is also closely related to the understanding of discrete subgroups acting on the two-dimensional complex hyperbolic space because of the embedding of ${\rm SU}(2,1)$ inside ${\rm SL}(3, \C)$. A complex Kleinian group means a discrete subgroup of ${\rm PSL}(3, \C)$ with a non-empty domain of discontinuity. It is a problem of broad interest to obtain a maximal domain of discontinuity for a complex Kleinian group. On the other hand,  in \cite{Ku}, Kulkarni developed the notion of a generalised limit set for any group acting on a  locally compact Hausdorff topological space. Cano et al. have systematically investigated the Kulkarni limit sets and the Kulkarni domain of discontinuity for a wide class of complex Kleinian groups, see  \cite{cns}, \cite{can2}, \cite{can},\cite{Na}. The Kulkarni limit set may not give the maximal domain of discontinuity, but it certainly gives a domain of discontinuity for the group under consideration. This may help in understanding the  dynamics of the group. 

The aim of this paper is to initiate an investigation of "quaternionic Kleinian groups", that is, discrete subgroups of ${\rm PSL}(3, \H)$ with a non-empty domain of discontinuity. A starting point to do that is to understand the limit sets of the cyclic subgroups of ${\rm PSL}(3, \H)$. In this paper, we compute the Kulkarni limit sets of cyclic subgroups of ${\rm PSL}(3, \H)$. We have worked following the spirit of Navarette \cite{TR} who computed the Kulkarni limit sets of cyclic subgroups of ${\rm PSL}(3, \C)$. 

We shall describe the limit sets in three tables. Before that, in the following, we recall the notion of the Kulkarni limit set.  

\begin{definition}
	Let $P_X=\{A_\beta \mid  \beta \in B\}$ be a family of subsets of $X$, where $B$ is an infinite indexing set. A point $p$  in $X$ is called a \textit{cluster point} of $P_X$ if every neighbourhood
	of $p$ intersects $A_\beta$ for infinitely many  $\beta$ in $B$.
\end{definition}

Consider the natural action of a subgroup  $G$  of ${\rm SL}(3, \H)$ on the two-dimensional quaternionic projective space $X= \P_{\H}^2$. The isotropy subgroup of any $x\in X$ is defined as $G_x=\{g\in G \mid  gx=x\}$. Consider the following three sets
\begin{enumerate}[(a)]
	\item $L_0(G):=$ the closure of the set of points of $X$ which have an infinite isotropy group,
	\item $L_1(G):=$ the closure of the cluster points of orbits of points in $X\setminus L_0(G)$, and
	\item $L_2(G):=$ the closure of the cluster points of $\{g(K)\}_{g\in G}$, where $K$ runs over all the compact subsets of $X\setminus \{L_0(G)\cup L_1(G)\}$.
\end{enumerate}

\begin{definition}\label{def-kul-limitset}
	With $G$ and $X$ as defined above, the \textit{Kulkarni limit set} of $G$ is
	\begin{equation}
		\Lambda(G):=L_0(G)\cup L_1(G)\cup L_2(G).
	\end{equation}
	The \textit{Kulkarni domain of discontinuity} of $G$ is defined as $\Omega(G)=X\setminus \Lambda(G)$.
\end{definition}

We remark that the \defref{def-kul-limitset} is valid for any group $G$ acting on a locally compact Hausdorff space $X$ and 
Kulkarni proved the following proposition in \cite{Ku}.
\begin{proposition}[cf.~{\cite[ Proposition 1.3]{Ku}}] 
	Let $X$ be a locally compact Hausdorff space and $G$ be a group acting on $X$, then $G$ acts properly discontinuously on $\Omega(G)$. Also, $\Omega(G)$ is a $G$-invariant open subset of $X$.  Further,  if $\Omega(G)\neq \phi$,  then $G$ is discrete.
\end{proposition}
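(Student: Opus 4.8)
The plan is to treat this as a purely topological statement: the quaternionic structure plays no role, so I would argue for an arbitrary group $G$ acting by homeomorphisms on a locally compact Hausdorff space $X$ (and, for the final clause, for a topology on $G$ making the action continuous, which in our setting is the one induced from $\mathrm{SL}(3,\H)$). Openness of $\Omega(G)$ is immediate: $\Lambda(G)=L_0(G)\cup L_1(G)\cup L_2(G)$ is a finite union of closures, hence closed. For $G$-invariance, since $G$ acts by homeomorphisms it suffices to check that the sets whose closures define $L_0,L_1,L_2$ are $G$-invariant. The set of points with infinite isotropy is $G$-invariant because $G_{hx}=hG_x h^{-1}$; hence $L_0(G)$, and therefore $X\setminus L_0(G)$ and $X\setminus(L_0(G)\cup L_1(G))$, are $G$-invariant. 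For any $h\in G$ one has $h(Gx)=Gx$ as sets, so the set of accumulation points of orbits of points of $X\setminus L_0(G)$ is $h$-invariant, giving $G$-invariance of $L_1(G)$. Finally, for compact $K\subseteq X\setminus(L_0(G)\cup L_1(G))$ the reindexing $g\mapsto hg$ identifies $\{hgK\}_{g\in G}$ with $\{gK\}_{g\in G}$, so $h$ carries a cluster point of $\{gK\}_{g\in G}$ to a cluster point of the same family; this gives $G$-invariance of $L_2(G)$, hence of $\Lambda(G)$ and of $\Omega(G)$.

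The core is proper discontinuity on $\Omega(G)$, which I would deduce from the single assertion: for every compact $K\subseteq\Omega(G)$, the set $F_K:=\{g\in G: gK\cap K\neq\emptyset\}$ is finite (taking $K=\{x\}$ this recovers finiteness of each isotropy group $G_x$ for $x\in\Omega(G)$, which is anyway immediate from $x\notin L_0(G)$). Suppose $F_K$ were infinite. Choose pairwise distinct $g_n\in F_K$ and points $x_n\in K$ with $g_n x_n\in K$. Since $X$ is Hausdorff, $K$ is closed, and since $K$ is compact — and $X=\P_{\H}^2$ is metrizable — we may pass to a subsequence with $g_nx_n\to y\in K$. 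Then every neighbourhood of $y$ contains $g_nx_n\in g_nK$ for all large $n$, and the $g_n$ are pairwise distinct, so $y$ is a cluster point of the family $\{gK\}_{g\in G}$. But $K\subseteq\Omega(G)$ is in particular a compact subset of $X\setminus(L_0(G)\cup L_1(G))$, so this forces $y\in L_2(G)\subseteq\Lambda(G)$, contradicting $y\in K\subseteq\Omega(G)$. Hence $F_K$ is finite; applying this with $K$ a compact neighbourhood of an arbitrary point of $\Omega(G)$ (such neighbourhoods exist since $X$ is locally compact Hausdorff) shows that $G$ acts properly discontinuously on $\Omega(G)$.

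For the last clause, suppose $\Omega(G)\neq\emptyset$. Pick $p\in\Omega(G)$ and a compact neighbourhood $K\subseteq\Omega(G)$ of $p$, so $p$ lies in the interior of $K$, and let $F_K$ be as above, a finite set. If $G$ were not discrete there would be pairwise distinct elements $g_n\in G$ with $g_n\to e$; by continuity of the action $g_np\to p$, so $g_np$ lies in $K$ for all large $n$, whence $g_n\in F_K$ for infinitely many $n$ — contradicting finiteness of $F_K$. Therefore $G$ is discrete.

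The only delicate point is the interplay with $L_2(G)$ in the middle paragraph: one must be sure that the chosen $g_n$ are genuinely distinct elements of $G$, so that the limit $y$ really is a cluster point of $\{gK\}_{g\in G}$ in the sense of the definition (intersecting $g_nK$ for infinitely many indices), and one must notice that \emph{every} compact subset of $\Omega(G)$ is automatically an admissible compactum in the definition of $L_2(G)$ — this is exactly what produces the contradiction. The remaining steps — closedness of $\Lambda(G)$, the $G$-invariance bookkeeping, and the non-discreteness argument — are routine, and no property of $\P_{\H}^2$ beyond its being a compact metrizable space on which $\mathrm{SL}(3,\H)$ acts continuously is used.
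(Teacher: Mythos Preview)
The paper does not give its own proof of this proposition: it is stated with a reference to Kulkarni's original article \cite{Ku} and is used as a black box. Your argument is essentially the standard one and is correct for the setting of the paper.

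One small remark: your proper-discontinuity step invokes metrizability of $X=\P_{\H}^2$ to extract a convergent subsequence of $(g_nx_n)$ in $K$. For the proposition as stated (arbitrary locally compact Hausdorff $X$) this step is not available, but it is also not needed: in a compact Hausdorff space any sequence has a cluster point, and a cluster point $y$ of $(g_nx_n)$ already has the property that every neighbourhood of $y$ meets $g_nK$ for infinitely many distinct $g_n$, which is exactly what the definition of $L_2(G)$ requires. With this adjustment your argument proves the general statement verbatim; as written it covers the case actually used in the paper, which is all that is required here.
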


\begin{definition}
	A discrete subgroup $G\subset \mathrm{PSL}(3,\mathbb{H})$ is called \textit{quaternionic Kleinian} if there exists a non-empty open invariant set where the action is properly discontinuous, i.e., $\Omega(G)\neq \phi$.
\end{definition}
{{Since ${\rm SL}(3, \H)$ is a double cover of ${\rm PSL}(3, \H)$ by $\{\pm \mathrm{ I}_3\}$}}, we often lift an element from the projective group ${\rm PSL}(3, \H)$ to ${\rm SL}(3, \H)$ and will consider its matrix representation. 
To classify the limit sets, we pick up a Jordan form in ${\rm SL}(3, \H)$ and compute the limit set for the cyclic subgroup generated by that Jordan form. Note that each eigenvalue class of a quaternionic matrix contains a unique complex representative with non-negative imaginary part. This gives us the following classification of elements of ${\rm SL}(3, \H)$ into three mutually exclusive classes.

\begin{definition}	\label{def:classification}
	Let $g$ be an element in ${\rm SL(3, \H)}$. 
	\begin{enumerate}[(i)]
		\item $g$ is called elliptic if it is semisimple and the eigenvalue classes are represented by unit modulus complex numbers. In other words, the complex representatives of the eigenvalues are $e^{i \alpha}$, $e^{i \beta}$,  $e^{i \gamma}$, where $\alpha, \beta, \gamma \in [0,\pi]$.
		
		\item $g$ is called loxodromic if not all the eigenvalue classes are represented by unit modulus complex numbers.  
		
		\item $g$ is called parabolic if it is not semisimple, and the eigenvalue classes are represented by unit modulus complex numbers. 
	\end{enumerate}
\end{definition} 

The above classes can be divided further into several subclasses: an elliptic element is rational elliptic if all of $\alpha$, $\beta$, and $\gamma$ are rational. A loxodromic element is regular loxodromic if all its eigenvalues have distinct moduli. If all the eigenvalues of a parabolic element are $1$, we call it unipotent. Otherwise, we call it ellipto-parabolic or ellipto-translation according as the corresponding minimal polynomial has a  factor of degree $2$ or $3$. We note here that the nomenclature for the subclasses used in this paper is influenced by similar names from Navarette's work \cite{TR}. We refer to the subsequent sections for details of the subclasses. 

Let $\{e_1, e_2, e_3\} \subset  \P_{\H}^2$ be the projection of the standard basis $ \{ \textbf{e}_1, \textbf{e}_2, \textbf{e}_3 \}$ of $\H^3$ to the projective space $\P_{\H}^2$. 
We refer to \secref{subsec-quater-line} for the notion of quaternionic projective space, and
$\mathbb{L}\{p,q\}$ denotes the quaternionic projective line passing through the distinct points $p, q  \in \P_{\H}^2$. We have summarized the Kulkarni limit sets for the cyclic subgroups of ${\rm PSL}(3, \H)$ in Table \ref{table:1}, Table \ref{table:2}, and Table \ref{table:3}.

\begin{table}[H]	
	\centering{
		
		\caption{Kulkarni sets for cyclic subgroups of elliptic elements of $\mathrm{PSL}(3,\mathbb{H})$}
		\begin{tabular}{|p{6cm}|p{4cm}|p{1.5cm}|p{1.5cm}|p{1.5cm}|}
			\hline
			\textbf{Kulkarni Set} & $\mathbf{L_0(G)}$  & $\mathbf{L_1(G)}$ & $\mathbf{L_2(G)}$ & $\mathbf{\Lambda(G)}$\\
			\hline
			\textit{Rational elliptic} & $\phi$ &  $\phi$ &  $\phi$ &  $\phi$ \\
			\hline
			\textit{Simple irrational elliptic} & $\mathbb{P}^2_\mathbb{C}$  & $\mathbb{P}^2_\mathbb{H}$  & $\phi$ & $\mathbb{P}^2_\mathbb{H}$\\
		\hline
			\textit{Compound irrational elliptic} & set of fixed points & $\mathbb{P}^2_\mathbb{H}$ & $\phi$ & $\mathbb{P}^2_\mathbb{H}$\\
			\hline
		\end{tabular}
		\label{table:1}}
\end{table}

\begin{table}[H]
	
	\centering{
		
		\caption{Kulkarni sets for cyclic subgroups of loxodromic elements of $\mathrm{PSL}(3,\mathbb{H})$}
		\begin{tabular}{|p{2.4cm}|p{2.8cm}|p{2.8cm}|p{3.5cm}|p{3.5cm}|}
			
			\hline
			\textbf{Kulkarni Set} & $\mathbf{L_0(G)}$  & $\mathbf{L_1(G)}$ & $\mathbf{L_2(G)}$ & $\mathbf{\Lambda(G)}$\\
			\hline
			\textit{Regular $loxodromic$} & $\{e_1,e_2,e_3\}$ & $\{e_1,e_2,e_3\}$ & $\mathbb{L}{ \{e_1,e_2\}}\cup \mathbb{L}{ \{e_2,e_3\}} $ & $\mathbb{L}{ \{e_1,e_2\}}\cup\mathbb{L}{\{e_2,e_3\}} $ \\
			\hline
			\textit{Screw} & $\{e_1,e_2,e_3\}$ &$\mathbb{L} {\{e_1,e_2\}}\cup\{e_3\}$ & $\mathbb{L}{ \{e_1,e_2\}}\cup\{e_3\}$ & $\mathbb{L}{ \{e_1,e_2\}}\cup\{e_3\}$ \\
			\hline
			\textit{Homothety} & set of fixed points & $\mathbb{L}{ \{e_1,e_2\}}\cup\{e_3\}$ & $\mathbb{L}{ \{e_1,e_2\}}\cup\{e_3\}$ & $\mathbb{L}{ \{e_1,e_2\}}\cup\{e_3\}$ \\
			\hline
			\textit{Loxo-parabolic} & $\{e_1,e_3\}$ & $\{e_1,e_3\}$ & $\mathbb{L}{\{e_1,e_2\}}\cup \mathbb{L} {\{e_1,e_3\}} $ & $\mathbb{L}{\{e_1,e_2\}}\cup \mathbb{L}{ \{e_1,e_3\} }$ \\
			\hline
		\end{tabular}
		\label{table:2}}
\end{table}

\begin{table}[H]
	
	\centering{
		
		\caption{Kulkarni sets for cyclic subgroups of parabolic elements of $\mathrm{PSL}(3,\mathbb{H})$}
		\begin{tabular}{|p{6cm}|p{2cm}|p{2cm}|p{2cm}|p{2cm}|}
			
		\hline
	
			\textbf{Kulkarni Set} & $\mathbf{L_0(G)}$  & $\mathbf{L_1(G)}$ & $\mathbf{L_2(G)}$ & $\mathbf{\Lambda(G)}$\\
			\hline
			\textit{Vertical  translation} & $\mathbb{L}\{e_1,e_3\}$ & $\{e_1\}$ & $\{e_1\}$ & $\mathbb{L}\{e_1,e_3\}$ \\
			\hline
			\textit{Non-vertical translation} & $\{e_1\}$ & ${\{e_1\}}$ & $\mathbb{L}{ \{e_1,e_2\}}$ & $\mathbb{L}{ \{e_1,e_2\}}$ \\
			\hline
			\textit{Rational ellipto-parabolic} & $\mathbb{L}{ \{e_1,e_3\}}$ & ${ \{e_1\}}$ &$ \{e_1\}$ & $\mathbb{L}{ \{e_1,e_3\}}$ \\
			\hline
			\textit{Irrational ellipto-parabolic} & ${ \{e_1,e_3\}}$ & $\mathbb{L}{ \{e_1,e_3\}}$ & $\{e_1\}$  & $\mathbb{L}{ \{e_1,e_3\}}$ \\
			\hline
			\textit{Ellipto-translation} & $\{e_1\}$ & $\{e_1\}$ & $\mathbb{L}\{e_1,e_2\}$ & $\mathbb{L}\{e_1,e_2\}$\\
		\hline
		\end{tabular}
		\label{table:3}}
\end{table}

There is another notion of a limit set for the action of a discrete subgroup on a linear space that was introduced by Conze and Guivarc’h, cf.  \cite{Conze}. The  notion of Conze-Guivarc’h limit set is inspired by the ideas of proximal transformations introduced by Abels, Margulis, and Soĭfer in \cite{AMS}.  

\begin{definition}\label{proximal}
	An element $g$ in $\mathrm{SL}(3,\mathbb{H})$ is called proximal if it has a maximal norm eigenvalue. In the case of $\mathrm{PSL}(3,\mathbb{H})$ we say that an element is proximal if its lift is proximal.
\end{definition}

\begin{definition}\label{conze}
	Let $G \subset \mathrm{PSL}(3,\mathbb{H})$ that contains proximal elements and whose action on $ \P_{\H}^2$ is strongly irreducible, i.e., there does not exist any proper non-zero subspace of $ \P_{\H}^2$  invariant under the action of a subgroup of finite index in $G$.  The \textit{Conze-Guivarc’h limit} set is the closure of the subset of
	$\mathbb{P}^2_\mathbb{H}$ consisting of all the attracting fixed points of proximal elements of $G$.
\end{definition}

Note that the Conze-Guivarc’h limit set is only applicable to those discrete subgroups of $\mathrm{PSL}(3,\mathbb{H})$ with proximal elements,  but not every discrete subgroup contains proximal elements. In \cite{DU},  Barrera et al. considered the action of discrete subgroups of $ \mathrm{PSL}(3,\mathbb{C})$  on the dual space of $\mathbb{P}^2_\mathbb{C}$  and introduced the notion of  {\it extended Conze-Guivarc’h limit set} for complex Kleinian groups. These have been further studied in  \cite{PS}; also see \cite{cns} and the references therein. We now extend the definition from \cite{DU} to the quaternionic setup. 
\begin{definition} \label{def:conze2}
	Let us consider $G\subset \mathrm{PSL}(3,\mathbb{H})$, acting on the dual space $(\mathbb{P}^2_\mathbb{H})^*$ which is the space of lines in $\mathbb{P}^2_\mathbb{H}$.  We say that $q\in (\mathbb{P}^2_\mathbb{H})^*$ is a limit point of $G$ if there exists an open subset $U\subset (\mathbb{P}^2_\mathbb{H})^*$ and there exists a sequence $\{g_n\}\subset G$ of distinct elements such that for every $p\in U$,  $\lim_{n\rightarrow \infty} g_n\cdot p=q.$
	The set of limit points will be called the \textit{extended Conze-Guivarc’h limit set}, denoted by $\hat{L}(G)$.
\end{definition}
In Section \ref{sec:conze}, we have discussed the extended Conze-Guivarc’h limit set for cyclic subgroups of $\mathrm{PSL}(3,\mathbb{H})$. Table \ref{table:4} gives the extended Conze-Guivarc’h limit sets for cyclic subgroups of ${\rm PSL}(3, \H)$.

\begin{table}[H]
	
	\centering{
		
		\caption{Extended Conze-Guivarc’h limit sets for cyclic subgroups of $\mathrm{PSL}(3,\mathbb{H})$}
		\begin{tabular}{||p{1.8cm}|p{.9cm}||p{3.3cm}|p{1.5cm}||p{2.6cm}|p{3.5cm}||}
			
			\hline\hline
			\textbf{Elliptic} & $\mathbf{\hat{L}(G)}$  & \textbf{Parabolic} & $\mathbf{\hat{L}(G)}$ & \textbf{Loxodromic} &  $\mathbf{\hat{L}(G)}$\\
			\hline
			\textit{Rational elliptic} & $\phi$ & \textit{Vertical translation}  & $\mathbb{L}{ \{e_1,e_3\}}$ & \textit{Regular $loxodromic$} & $\mathbb{L}\{e_1,e_2\}\cup\mathbb{L}\{e_2,e_3\}$\\
			\hline
			\textit{Simple irrational elliptic} & $(\mathbb{P}^2_\mathbb{H})^*$ & \textit{Non-vertical translation} & $\mathbb{L}{ \{e_1,e_2\}}$ & \textit{Screw} & $\mathbb{L}{ \{e_1,e_2\}}$\\
			\hline
			\textit{Compound irrational elliptic} & $(\mathbb{P}^2_\mathbb{H})^*$ &  \textit{Ellipto-parabolic} & $\mathbb{L}{\{e_1,e_3\}}$ & \textit{Homothety} & $\mathbb{L}{ \{e_1,e_2\}}$ \\
			\hline
			&  &\textit{Ellipto-translation}& $\mathbb{L}{ \{e_1,e_2\}}$   & \textit{Loxo-parabolic} & $\mathbb{L}{ \{e_1,e_2\}} \cup \mathbb{L}\{e_1,e_3\}$\\
			\hline\hline
		\end{tabular}
		\label{table:4}}
	\end{table}

Note that even though the Jordan forms in $\mathrm{SL}(3,\mathbb{H})$ (see \lemref{lem-Jordan-M(n,H)}) are given by complex matrices, the computation of the limit sets is not a straightforward adaption of the complex case. There are also a few cases that do not appear as complex Kleinian groups. We can consider the lift of a Jordan matrix representative of a given quaternionic projective transformation only up to scaling by a real number in the quaternionic case. Unlike the complex case, here we can not take lift $\mathrm{diag}(1,1,e^{2\pi i (\beta - \alpha)})$ of a quaternionic projective transformation represented by the Jordan form $\mathrm{diag}(e^{2\pi i\alpha},e^{2\pi i \alpha},e^{2\pi i \beta})$, where $\alpha, \beta \in (0,\pi)$ (cf. \cite{cns}).  Due to the non-commutativity of the quaternions, this particular property not only differentiates from the corresponding results in the theory of the complex Kleinian groups but also gives us new cases to consider and investigate. For example, the quaternionic projective transformation represented by the Jordan form $\mathrm{diag}(e^{2\pi i\alpha},e^{2\pi i \beta},e^{2\pi i \gamma}),\; \alpha,\beta,\gamma\in \mathbb{R}\setminus\mathbb{Q}$, is completely a new type of projective transformation that appears in the quaternionic case. 

Finally, we remark that in the complex case, Cano et al.  \cite{clu} have investigated the dynamical classification of elements and the Kulkarni limit sets for cyclic subgroups of ${\rm PSL}(n+1, \C)$. It is expected that it should be possible to generalise their work to ${\rm PSL}(n+1, \H)$ as well. 

\subsection{Structure of the paper}
We discuss the preliminaries in the \secref{prelim}. The third, fourth, and fifth sections are devoted to proving our results about the Kulkarni limit sets for cyclic subgroups of ${\rm PSL}(3, \H)$ generated by elliptic,  loxodromic and parabolic elements, respectively.   In the \secref{sec:conze},  we have studied the extended Conze-Guivarc’h limit sets for cyclic subgroups of ${\rm PSL}(3, \H)$ and proved Table \ref{table:4}.

\section{Preliminaries}\label{prelim}
Let $\H$ denote the division ring of Hamilton's quaternions.  We recall that every element in $\H$ can be expressed as $a=a_0 + a_1 i + a_2 j + a_3 k$, where $i^2=j^2=k^2= i j k =-1$,  and $a_0, a_1, a_2, a_3 \in \R$.  The conjugate of $a$ is given by $\bar a=a_0-a_1 i -a_2 j -a_3k$.   We identify the real subspace $\R \oplus \R i$ with the usual complex plane $\C$.  For an elaborate discussion on the theory of matrices over the quaternions,  see \cite{rodman}, \cite{Zh}.

\begin{definition}\label{def-eigen-M(n,H)} Let $A$ be an arbitrary element of  the algebra $ \mathrm{M}(n,\H)$ of all $n \times n$ matrices over $\H$.
A non-zero vector $v \in \H^n $ is said to be a (right) eigenvector of $A$ corresponding to a (right) eigenvalue $\lambda \in \H $ if the equality $ Av = v\lambda $ holds.
\end{definition}

Eigenvalues of $A$ occur in similarity classes, i.e., if $v$ is a eigenvector corresponding to the eigenvalue $\lambda$, then $v \mu \in v \mathbb H$ is a eigenvector corresponding to the eigenvalue $\mu^{-1} \lambda \mu$.  Each similarity class of eigenvalues contains a unique complex number with  non-negative imaginary part.  Here we shall represent each similarity class of eigenvalues by the \textit{unique complex representative} with non-negative imaginary part and refer to them as \textit{eigenvalues}.

Let $A \in  \mathrm{M}(n,\H)$ and write $ A = A_1 + A_2 \, j $,  where $ A_1,  A_2 \in \mathrm{M}(n,\C)$.  Now consider the embedding $ \Phi:  \mathrm{M}(n,\H)  \longrightarrow  \mathrm{M}(2n,\C)$ defined as 
\begin{equation*}\label{eq-embed-phi}
	\Phi(A) = \begin{pmatrix} A_1   &  A_2 \\
		- \overline{A_2} & \overline{A_1}  
	\end{pmatrix}, 
\end{equation*}
where $\overline{A_i}$ denotes the complex conjugate of  $A_i$. 

\begin{definition}
For $A \in  \mathrm{M}(n,\H)$, the determinant of $A$ is denoted by $ {\rm det}_{\H}(A)$  and is defined as  the determinant of  the corresponding matrix $ \Phi(A)$, i.e., $ {\rm det}_{\H}(A):=  {\rm det}(\Phi(A))$,  see   \cite[\S 5.9]{rodman}.
\end{definition}

Consider the Lie groups  $\mathrm{GL}(n,\H) = \{ g \in  \mathrm{M}(n,\H) \mid {\rm det_{\H}}(g)  \neq 0 \}$ and $  \mathrm{SL}(n,\H) = \{ g \in \mathrm{GL}(n,\H) \mid {\rm det_{\H}}(g)  = 1 \}.$

\begin{definition} [cf.~{\cite[p.  94]{rodman}}] \label{def-jordan}
A Jordan block $ \mathrm{J}(\lambda,m)$ is a $m \times m$ matrix with $ \lambda $ on the diagonal entries,  1 on all of the super-diagonal entries, and zero elsewhere.  We will refer to a block diagonal matrix where each block is a Jordan block as  \textit{Jordan form}. 
\end{definition}

Next we recall the Jordan  decomposition in $\mathrm{M}(n,\H)$,  see {\cite[Theorem 5.5.3]{rodman}}.
\begin{lemma}[{Jordan  forms in $ \mathrm{M}(n,\H)$, cf.~	 \cite{rodman}}] \label{lem-Jordan-M(n,H)}
For every $A \in  \mathrm{M}(n,\H)$ there is a invertible matrix $S \in  \mathrm{GL}(n,\H)$ such that $SAS^{-1}$ has the form 
	\begin{equation} \label{equ-Jordan-M(n,H)}
		SAS^{-1} =  \mathrm{J}(\lambda_1, m_1) \oplus  \cdots \oplus  \mathrm{J}(\lambda_k, m_k)
	\end{equation}
	where $ \lambda_1,  \dots,  \lambda_k $ are complex numbers (not necessarily distinct) and have non-negative imaginary parts. The form (\ref{equ-Jordan-M(n,H)}) is uniquely determined by $A$ up to a permutation of Jordan blocks.
\end{lemma}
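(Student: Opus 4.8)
The plan is to deduce the quaternionic statement from the classical complex Jordan canonical form through the embedding $\Phi\colon\mathrm{M}(n,\H)\to\mathrm{M}(2n,\C)$, together with the extra symmetry that the image of $\Phi$ carries. Identify $\H^{n}$, viewed as a complex vector space under left scalar multiplication by $\C\subset\H$, with $\C^{2n}$; under this identification, right multiplication by $\jb$ becomes a conjugate-linear map $\tau\colon\C^{2n}\to\C^{2n}$ with $\tau^{2}=-I$, and, because left and right multiplications commute, $\tau$ commutes with $\Phi(A)$ (which is just left multiplication by $A$) for every $A\in\mathrm{M}(n,\H)$. So $\C^{2n}$ equipped with $\tau$ is the ambient quaternionic space, and producing the quaternionic Jordan form of $A$ amounts to producing a $\tau$-equivariant complex change of basis that brings $\Phi(A)$ to complex Jordan form; selecting one basis vector from each $\tau$-orbit then yields an $\H$-basis of $\H^{n}$.

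First I would apply the usual complex Jordan decomposition to $\Phi(A)\in\mathrm{M}(2n,\C)$. Since $\Phi(A)\tau=\tau\Phi(A)$ and $\tau$ is conjugate-linear, $\tau$ carries the generalized eigenspace of $\Phi(A)$ for $\lambda$ onto that for $\bar\lambda$, preserving Jordan block sizes and multiplicities. Hence for each eigenvalue $\lambda$ with $\mathrm{Im}\,\lambda>0$ the Jordan blocks for $\lambda$ and for $\bar\lambda$ pair off, and for a real eigenvalue the blocks split into $\tau$-conjugate pairs of equal size (no $\tau$-stable chain can occur, since applying $\tau^{2}=-I$ to an eigenvector forces an impossible $|c|^{2}=-1$). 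The main step is then to promote each such pair to a single quaternionic Jordan block: given a Jordan chain $v_{1},\dots,v_{m}$ of $\Phi(A)$ for $\lambda$, the complex span of $v_{1},\dots,v_{m},\tau v_{1},\dots,\tau v_{m}$ is $\tau$-stable and $\Phi(A)$-stable, hence is an $\H$-subspace of $\H^{n}$ with $\H$-basis $v_{1},\dots,v_{m}$, and in this basis $A$ acts as $\mathrm{J}(\lambda,m)$ over $\H$: the chain relations $\Phi(A)v_{r}=\lambda v_{r}+v_{r-1}$ read, once the scalar $\lambda\in\C\subset\H$ is interpreted as right multiplication, exactly as $Av_{r}=v_{r}\lambda+v_{r-1}$. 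Assembling these subspaces over all chains gives $S\in\mathrm{GL}(n,\H)$ with $SAS^{-1}$ in the asserted block-diagonal form; uniqueness up to permutation of blocks is inherited from uniqueness of the complex Jordan form of $\Phi(A)$, the unordered list of pairs $(\lambda,m)$ over $\H$ being determined by the conjugation-symmetric list of pairs $(\lambda,m),(\bar\lambda,m)$ over $\C$.

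I expect the main obstacle to be making the pairing of complex Jordan chains genuinely $\tau$-equivariant rather than merely numerically matched, and in particular the real-eigenvalue case, where $\tau$ permutes chains inside a single eigenspace and one must organize a consistent pairing using $\tau^{2}=-I$; one must also check that the assembled change of basis is $\H$-linear, not just $\C$-linear. This is precisely where noncommutativity enters: the scalar $\lambda$ is forced to the right in $Av=v\lambda$, and the canonical form depends on the chosen complex representative only through its similarity class, since replacing $\lambda$ by $\mu^{-1}\lambda\mu$ conjugates the block by $\mathrm{diag}(\mu,\dots,\mu)$ without changing its $\H$-conjugacy class — which is why one normalizes to the representative with nonnegative imaginary part.
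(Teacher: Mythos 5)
The paper does not actually prove this lemma: it is quoted from Rodman (Theorem 5.5.3) with a citation and no argument, so there is no internal proof to compare against. Your route via the complex adjoint $\Phi(A)$ and the conjugate-linear structure map $\tau$ (right multiplication by $j$, with $\tau^2=-I$) is the standard one behind the cited result, and the outline is correct: the pairing of complex Jordan blocks for $\lambda$ and $\bar\lambda$, the translation of a chain relation $\Phi(A)v_r=\lambda v_r+v_{r-1}$ into the right-eigenvalue relation $Av_r=v_r\lambda+v_{r-1}$ defining $\mathrm{J}(\lambda,m)$ over $\H$, and the recovery of uniqueness from $\Phi(\mathrm{J}(\lambda,m))$ being complex-similar to $\mathrm{J}(\lambda,m)\oplus\mathrm{J}(\bar\lambda,m)$ all work. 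Two corrections. First, the complex structure on $\H^n\cong\C^{2n}$ must be the restriction of the \emph{right} $\H$-action to $\C$, not left scalar multiplication as you wrote: left multiplication by $i$ does not commute with left multiplication by a quaternionic matrix (already for $n=1$ and $A=j$), so with your stated convention $\Phi(A)$ would not be $\C$-linear; with the right action, your claims that $\tau$ is conjugate-linear and commutes with $\Phi(A)$ are exactly right and consistent with the paper's convention $Av=v\lambda$. Second, the step you flag as the main obstacle --- arranging the Jordan chains of a \emph{real} eigenvalue into $\tau$-conjugate pairs --- can be bypassed entirely: for $\lambda\in\R$ the operator $A-\lambda I$ is itself $\H$-linear (real scalars are central in $\H$), so its root space is an $\H$-subspace and the usual nilpotent Jordan-basis construction runs directly over the division ring $\H$, no equivariance needed. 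The $\tau$-pairing is only required for nonreal $\lambda$, where it is automatic because $\tau$ carries a Jordan basis of the $\lambda$-root space of $\Phi(A)$ to one of the $\bar\lambda$-root space and these two root spaces are complementary. With those adjustments the argument is complete and is essentially the proof the citation points to.
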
 

\subsection{The Quaternionic Projective Space $\P^2_\H$} \label{subsec-quater-line}
Consider the (right)  quaternionic space $\H^{3}$.  The two-dimensional quaternionic projective space $\P^2_\H$  is formed by taking the quotient of $\H^{3}\setminus \{0\}$ under the equivalence relation $ \sim $ in $\H^{3}\setminus \{0\}$,  where  $ z \sim w \Leftrightarrow z = w \, \alpha$  for some non-zero quaternion $\alpha$.  Let  $ \P : \H^{3} \setminus \{0\} \longrightarrow  \P^2_\H$  be the corresponding  quotient map.  A non-empty set  $W \subseteq \P^2_\H$ is said to be a projective subspace of dimension $k$ if there exists a   $(k + 1)$-dimensional $\H$-linear subspace $\widetilde{W}$ of  $\H^{3}$  such that $\P (\widetilde{W} \setminus  \{0\} ) = W$.  The quaternionic projective subspaces of dimension $1$ are called \textit{lines}.  Let $p = (x,y,z) \in \H^3 $ then we will use notation  $ [x:y:z]$ to denote $\P(p)$.  Note that $[x:y:z] = [x \alpha: y\alpha: z \alpha]$ for all $ (x,y,z) \in \H^3$ and non-zero quaternion $ \alpha \in \H$.
Given a set of points ${S}$ in $\P^2_\H$, we define
$$\langle{\,S\,}\rangle = \bigcap \{W \subseteq \P^2_\H : W  \hbox { is a projective subspace containing } {S} \}.$$
Clearly, $\langle{\,S\,}\rangle$ is a projective subspace of $\P^2_\H$.  If $p,q$ are distinct points of $\P^2_\H$ then $\langle\{p,q\}\rangle$ is the unique proper quaternionic projective subspace
passing through $p$ and $q$.  Such a subspace will be called a quaternionic (projective) line, and we will denote $\langle\{p,q\}\rangle$ by $\mathbb{L}{\{p, q\}}$. 

\subsection{Projective  Transformations}
Action of an arbitrary element $ \gamma \in \mathrm{GL}(3,\H)$ on  $ \P^2_\H$ is given by
$$ \gamma (\P(z)) = \P( \gamma(z)) \hbox{ for all } z \in \H^{3}\setminus \{0\}.$$
Note that for any non-zero  $r \in \R$,  we have $ (r\gamma) (\P(z)) = \gamma (\P(z))  \hbox{ for all } \, z \in \H^{3}\setminus \{0\}.$
We denote the corresponding quotient map by $\pi : \mathrm{GL}(3,\H) \longrightarrow  \mathrm{PGL}(3,\H)$. 
The group of projective transformations of $\P^2_\H$ may be identified with
$$\mathrm{PSL}(3,\H) := \mathrm{SL}(3,\H)/ \mathcal{Z}(\mathrm{SL}(3,\H)),$$
where  the center $ \mathcal{Z}(\mathrm{SL}(3,\H)) = \{ \pm \mathrm{I}_{3}  \}$ and  it acts by the usual scalar multiplication on $\H^{3}$. Given $\tilde{\gamma} \in \mathrm{PSL}(3,\H)$, we say that $ {\gamma} \in \mathrm{SL}(3,\H)$  is a lift of $\tilde{\gamma}$ if $\pi( {\gamma}) = \tilde{\gamma}$. There are exactly two lifts  $\tilde{\gamma}, -\tilde{\gamma} \in \mathrm{SL}(3,\H)$  for each projective transformation $\tilde{\gamma}  \in \mathrm{PSL}(3,\H)$. 

Let $ \tilde{g} \in \mathrm{PSL}(3,\H)$ be a projective transformation,  then it is called elliptic, loxodromic and parabolic according as its lift $   {g} \in \mathrm{SL}(3,\H)$,  see Definition \ref{def:classification}.  Note that this classification is well defined in view of the Jordan decomposition of matrices over the quaternions, see \lemref{lem-Jordan-M(n,H)}. 

\subsection{Pseudo-projective  Transformations}
The concept of pseudo-projective transformations was introduced by P. J.
Myrbeg \cite{myb}, see \cite{bcn} for a brief explanation. This has been very useful for understanding the complex Klenian groups.
We extend this notion over the quaternions here. 

Let $\widetilde{M}: \H^{3} \longrightarrow \H^{3} $ be a non-zero (right) linear transformation which is not necessarily invertible.  Let $\mathrm{Ker}(\widetilde{M}) \subseteq \H^{3} $ be its kernel.  Then $\widetilde{M}$ induces a well-defined transformation $M : \P^2_\H \setminus \mathrm{Ker}({M}) \longrightarrow \P^2_\H$ given by $M(\P(v)) = \P (\widetilde{M}(v))$,  where 
$\mathrm{Ker}({M}) \subseteq \P^2_\H$ is the image of $\mathrm{Ker}(\widetilde{M}) \setminus \{0\}$ under the projective map $\P$.  Note that $M$ is well defined because $\widetilde{M}(v) \neq 0$,  and $M $ is a projective transformation on its domain: for every non-zero quaternion $ \alpha \in \H $,  $M (\P(v \alpha)) = \P(\widetilde{M}(v \alpha))$ and coincide with $\P (\widetilde{M}(v))$ in $\P^2_\H $.

We call the map $M $ a pseudo-projective transformation,  and we denote by $\mathrm{QP}(3, \H)$ the space of all pseudo-projective transformations of $\P^2_\H $.  Therefore,
$$ \mathrm{QP}(3, \H) = \{M \mid \widetilde{M} \hbox{ is a non-zero linear transformation of }  \H^{3}\}.$$
Note that $\mathrm{PSL}(3,\H) \subseteq \mathrm{QP}(3, \H)$.   We will use pseudo-projective transformations and classification of the elements of the lifts of $\mathrm{PSL}(3,\mathbb{H})$ (see \defref{def:classification}) for computing the limit sets for different cyclic subgroups of $\mathrm{PSL}(3,\H)$.

\subsection{Dense subsets of $S^1$ and $T^2$}
We state without proof some of the important well-known results of unit circle and torus. 
\begin{lemma} [cf.~{\cite[Proposition 1.3.3]{kat}}]	\label{prop:conv1}
Let  $S^1=\{x\in \mathbb{C} \mid  |x|=1\}$ and $\alpha\in \mathbb{R}\setminus \mathbb{Q}$. Then 
the set $A=\{e^{2\pi i  n \alpha} \in S^1 \mid  n\in \mathbb{Z}\}$ is dense in $S^1$. 
\end{lemma}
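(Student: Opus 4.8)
The plan is to run a pigeonhole argument to produce elements of the orbit that represent rotations by arbitrarily small angles, and then ``step around'' the circle with such a small rotation. Fix $\varepsilon>0$; it suffices to show that every point of $S^1$ lies within distance $\varepsilon$ of some element of $A$. First I would record that the points $e^{2\pi i n\alpha}$, $n\in\mathbb{Z}$, are pairwise distinct: if $e^{2\pi i n\alpha}=e^{2\pi i m\alpha}$, then $(n-m)\alpha\in\mathbb{Z}$, which forces $n=m$ because $\alpha$ is irrational. In particular $A$ is infinite.

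Next, choose an integer $N$ with $2\pi/N<\varepsilon$ and partition $S^1$ into $N$ half-open arcs of equal length $2\pi/N$. Among the $N+1$ points $e^{2\pi i\cdot 0\cdot\alpha},\,e^{2\pi i\cdot 1\cdot\alpha},\,\dots,\,e^{2\pi i N\alpha}$, the pigeonhole principle gives two of them, say $e^{2\pi i p\alpha}$ and $e^{2\pi i q\alpha}$ with $0\le q<p\le N$, lying in the same arc. Setting $k:=p-q\in\{1,\dots,N\}$ we obtain $|e^{2\pi i k\alpha}-1|<\varepsilon$; equivalently, multiplication by $e^{2\pi i k\alpha}$ is a rotation of $S^1$ through an angle $\theta$ with $0<|\theta|<\varepsilon$ (here $\theta\ne 0$ since $k\alpha\notin\mathbb{Z}$ by the distinctness observation).

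Finally, consider the points $e^{2\pi i m k\alpha}$ for $m=0,1,2,\dots$. Consecutive points differ by the fixed rotation through $\theta$, so the successive gaps all have length $|\theta|<\varepsilon$, and as $m$ runs over a suitable finite range these points make one full loop of $S^1$. Hence every point of $S^1$ is within $\varepsilon$ of some $e^{2\pi i m k\alpha}\in A$, and since $\varepsilon>0$ was arbitrary, $A$ is dense in $S^1$. Alternatively, one may argue structurally: $A$ is an infinite subgroup of the compact group $S^1$, so $\overline{A}$ is a closed subgroup with infinitely many elements, and the only such closed subgroup of $S^1$ is $S^1$ itself. I do not expect a genuine obstacle here; the one point requiring a little care is checking that the finitely many iterates $e^{2\pi i m k\alpha}$ truly cover $S^1$ up to error $\varepsilon$, which holds because each step advances by the same angle $\theta$ of absolute value in $(0,\varepsilon)$, so no arc of length $\varepsilon$ can be jumped over before the iterates pass their starting point.
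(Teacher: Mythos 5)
Your proof is correct. The paper itself states this lemma without proof, simply citing Proposition 1.3.3 of Hasselblatt--Katok, and your pigeonhole argument is precisely the standard proof given there: produce an orbit element $e^{2\pi i k\alpha}$ realizing a nonzero rotation of angle less than $\varepsilon$ (using irrationality of $\alpha$ to rule out $\theta=0$), then iterate that rotation to leave no gap of length $\varepsilon$ in $S^1$. The alternative you sketch --- that $\overline{A}$ is an infinite closed subgroup of $S^1$ and hence all of $S^1$ --- is also valid and slightly slicker, at the cost of invoking the classification of closed subgroups of the circle. Either version is a complete and acceptable substitute for the omitted proof.
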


Note that for each $\alpha\in \mathbb{R}\setminus \mathbb{Q}$,  there exists a sequence $( n_k)_{k\in \mathbb{N}}$ such that $e^{2\pi i  n_k \alpha}$ converges to $ 1$ as $k\rightarrow \infty $.  Further, if $r \in \mathbb{Q}$ then we can also find a suitable sequence $( n_k)_{k\in \mathbb{N}}$ such that   $(e^{2\pi i n_k  \alpha},e^{2\pi i n_k  r \alpha})$ converges to $ (1,1)$ as $k\rightarrow \infty $. 

\begin{lemma}[cf.~{\cite[Proposition 1.4.1]{kat}}] \label{prop:conv2}
Let $T^2=S^1\times S^1$ and  $ \alpha, \beta \in \R$ such that $ \frac{\alpha}{\beta} \in \mathbb{R}\setminus \mathbb{Q}$. Then the set $A=\{(e^{2\pi i n\alpha},e^{2\pi i n\beta}) \in T^2 \mid n\in \mathbb{Z}\}$  is dense in $T^2$. 
\end{lemma}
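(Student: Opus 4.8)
The plan is to realise $A$ as a subgroup of the compact abelian group $T^2$ and to prove density by Pontryagin duality. Writing coordinates multiplicatively, $A=\{R^n(1,1)\mid n\in\mathbb{Z}\}$ is the orbit of $(1,1)$ under the translation $R(x,y)=(e^{2\pi i\alpha}x,\,e^{2\pi i\beta}y)$, so $A$ is a subgroup of $T^2$ and its closure $H:=\overline{A}$ is a closed subgroup. Hence $A$ is dense exactly when $H=T^2$, and by the annihilator criterion this holds if and only if the only continuous character of $T^2$ vanishing on $A$ is trivial. Since every character has the form $\chi_{p,q}(x,y)=x^{p}y^{q}$ with $(p,q)\in\mathbb{Z}^2$, and $\chi_{p,q}\equiv1$ on $A$ precisely when $e^{2\pi in(p\alpha+q\beta)}=1$ for all $n$, i.e.\ when $p\alpha+q\beta\in\mathbb{Z}$, the whole lemma reduces to the arithmetic statement that $p\alpha+q\beta\in\mathbb{Z}$ forces $(p,q)=(0,0)$.

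First I would extract from the hypothesis $\alpha/\beta\notin\mathbb{Q}$ what it genuinely gives. If $p\alpha+q\beta=0$ for some $(p,q)\neq(0,0)$ then, as $\beta\neq0$, we would get $\alpha/\beta=-q/p\in\mathbb{Q}$, a contradiction; so there are no \emph{homogeneous} integer relations. The same computation shows that the orbit points $R^n(1,1)$ are pairwise distinct (a coincidence $R^n=R^m$ would give $(n-m)\alpha,(n-m)\beta\in\mathbb{Z}$ and again $\alpha/\beta\in\mathbb{Q}$), so $A$ is infinite and $H$ is an infinite closed subgroup of $T^2$; therefore $\dim H\in\{1,2\}$. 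When a coordinate exponent is irrational one can moreover feed the corresponding projection into \lemref{prop:conv1} to see that $H$ surjects onto that circle factor, pinning down the geometry of $H$ further.

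The main obstacle is to rule out the remaining possibility $\dim H=1$, i.e.\ that $H$ is a finite union of parallel subcircles. By the character description this happens exactly when some \emph{inhomogeneous} relation $p\alpha+q\beta=r$ holds with $(p,q)\in\mathbb{Z}^2\setminus\{0\}$ and $r\in\mathbb{Z}\setminus\{0\}$. The hypothesis $\alpha/\beta\notin\mathbb{Q}$ controls only the case $r=0$, so the heart of the proof is to exclude nonzero integer values of $p\alpha+q\beta$; this is precisely the point where the discrete orbit departs from the continuous linear-flow statement of \cite[Proposition 1.4.1]{kat}, and where the analysis must be carried out most carefully. A convenient way to organise this final step is the Weyl averaging test: for fixed nontrivial $(p,q)$ one has $\frac1N\sum_{n=0}^{N-1}\chi_{p,q}(R^n(1,1))\to0$ as soon as $p\alpha+q\beta\notin\mathbb{Z}$, and the simultaneous vanishing of all such nontrivial averages is equivalent to the equidistribution, hence density, of $A$ in $T^2$; thus everything funnels back to showing $p\alpha+q\beta\notin\mathbb{Z}$ for every $(p,q)\neq(0,0)$.
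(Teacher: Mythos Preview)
Your reduction via Pontryagin duality is the right framework, and you have correctly isolated the crux: the orbit is dense in $T^2$ if and only if $p\alpha+q\beta\notin\Z$ for every $(p,q)\in\Z^2\setminus\{(0,0)\}$. The trouble is that this arithmetic condition is \emph{not} implied by the hypothesis $\alpha/\beta\notin\mathbb{Q}$, so the lemma as stated is actually false and your proof cannot be completed. Take $\alpha=\sqrt{2}$, $\beta=1$: then $\alpha/\beta=\sqrt{2}$ is irrational, yet $(p,q)=(0,1)$ gives $p\alpha+q\beta=1\in\Z$, and indeed the orbit $\{(e^{2\pi i n\sqrt{2}},1):n\in\Z\}$ sits inside the subcircle $S^1\times\{1\}$ and is not dense in $T^2$. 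Your own analysis already exposes this: you observe that the hypothesis rules out only the homogeneous relation $p\alpha+q\beta=0$, while inhomogeneous relations with nonzero right-hand side remain uncontrolled. The correct hypothesis for density of the \emph{discrete} orbit is that $1,\alpha,\beta$ be rationally independent; the condition $\alpha/\beta\notin\mathbb{Q}$ is the criterion for density of the \emph{continuous} line $t\mapsto(e^{2\pi i t\alpha},e^{2\pi i t\beta})$, and the paper has conflated the two.

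The paper gives no proof of its own, and in fact the authors use only the much weaker consequence recorded in \remref{rem-torus-denseness}: that one can find $n_k\to\infty$ with $(e^{2\pi i n_k\alpha},e^{2\pi i n_k\beta})\to(1,1)$. That weaker statement \emph{does} hold whenever the cyclic group generated by $(e^{2\pi i\alpha},e^{2\pi i\beta})$ is infinite---in particular whenever $\alpha$ is irrational---since any infinite subgroup of a compact group accumulates at the identity. So the gap you have detected at the inhomogeneous step lies in the statement of the lemma, not in your argument, and it does not propagate to the applications downstream.
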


\begin{remark} \label{rem-torus-denseness}
It follows  that if  $\alpha\in \mathbb{R}\setminus \mathbb{Q}$ and $\beta \in \R$ then there exists a sequence $(n_k)_{k\in \mathbb{N}}$ such that $(e^{2\pi i n_k\alpha},e^{2\pi i n_k\beta})\xrightarrow{k\rightarrow \infty} (1,1)$. We can also deduce that there exists a sequence $( n_k)_{k\in \mathbb{N}}$ such that $(e^{2\pi i \alpha n_k},e^{2\pi i \beta n_k},e^{2\pi i \gamma n_k})\xrightarrow{k\rightarrow \infty} (1,1,1)$ whenever $\alpha,\beta,\gamma\in \mathbb{R}\setminus\mathbb{Q} $,   see \cite{kat},  \cite{TR}.
\qed
\end{remark}

\subsection{Useful Lemmas}
In this subsection, we shall discuss two useful lemmas we need in our later sections. The proofs of these lemmas follow in a similar line of argument as in the corresponding complex analogue, see \cite[Lemma 3.2]{bcn10},  {\cite[Proposition 3.1]{can2}} and {\cite[Lemma 5.3]{TR}}.

\begin{lemma}	\label{lem:app1}
	Let $(g_n)_{n\in \mathbb{N}}\subset \mathrm{PSL}(3,\mathbb{H})$ be a sequence. Then there exists a subsequence $(g_{n_k})_{k\in \mathbb{N}}$ and an element $g\in \mathrm{QP}(3, \mathbb{H})$ such that $g_{n_k}\rightarrow g$ uniformly on the compact subsets of $\mathbb{P}^2_\mathbb{H}\setminus Ker(g)$.
\end{lemma}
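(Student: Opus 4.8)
The plan is to mimic the standard argument from the complex setting (\cite[Lemma 3.2]{bcn10}, \cite[Proposition 3.1]{can2}) while being careful about two quaternionic subtleties: elements of $\mathrm{PSL}(3,\mathbb{H})$ are only defined up to real scaling, and entries do not commute. First I would fix, for each $n$, a lift of $g_n$ in $\mathrm{SL}(3,\mathbb{H})$, which I still call $g_n$, and represent it by its matrix $(a^{(n)}_{ij})\in \mathrm{M}(3,\mathbb{H})$. Define $\mu_n := \max_{i,j}|a^{(n)}_{ij}|$, which is strictly positive since $g_n\neq 0$, and set $h_n := \mu_n^{-1} g_n$; since $\mu_n\in\mathbb{R}_{>0}$, the transformation $h_n$ induces exactly the same projective map as $g_n$ on $\mathbb{P}^2_\mathbb{H}$. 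Each $h_n$ now has all entries of modulus $\le 1$, so $(h_n)_n$ lives in a compact subset of $\mathrm{M}(3,\mathbb{H})\cong \mathbb{R}^{36}$.

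By compactness, pass to a subsequence $(h_{n_k})_k$ converging entrywise to some $\widetilde M\in \mathrm{M}(3,\mathbb{H})$; since one entry of each $h_{n_k}$ has modulus exactly $1$, the limit $\widetilde M$ is a nonzero linear transformation of $\mathbb{H}^3$, hence defines a pseudo-projective transformation $g\in \mathrm{QP}(3,\mathbb{H})$ with kernel $\mathrm{Ker}(g)=\mathbb{P}(\mathrm{Ker}(\widetilde M)\setminus\{0\})$. It remains to prove uniform convergence $g_{n_k}\to g$ on compact subsets of $\mathbb{P}^2_\mathbb{H}\setminus \mathrm{Ker}(g)$. Here I would work in homogeneous coordinates: for $p=\mathbb{P}(v)$ with $v\in\mathbb{H}^3$ a unit representative, $g_{n_k}(p)=\mathbb{P}(h_{n_k}v)$ and $g(p)=\mathbb{P}(\widetilde M v)$. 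On a compact set $K\subset \mathbb{P}^2_\mathbb{H}\setminus\mathrm{Ker}(g)$ one can choose unit representatives $v$ continuously (locally) and, crucially, $\|\widetilde M v\|$ is bounded below by a positive constant $\delta_K$ on $K$ by continuity and compactness; then $\|h_{n_k}v\|\ge \delta_K/2$ for $k$ large, uniformly on $K$, since $h_{n_k}\to \widetilde M$ uniformly. A standard estimate then shows that the projective distance (say, the distance induced by the Fubini–Study-type metric on $\mathbb{P}^2_\mathbb{H}$, or the gap between the $\mathbb{H}$-lines $h_{n_k}v\,\mathbb{H}$ and $\widetilde M v\,\mathbb{H}$) is bounded by $C_K\,\|h_{n_k}-\widetilde M\|_{\mathrm{op}}$, which tends to $0$ uniformly on $K$.

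The main obstacle I anticipate is purely bookkeeping rather than conceptual: verifying that the metric estimate on $\mathbb{P}^2_\mathbb{H}$ goes through without commutativity. Concretely, one needs that the map sending a unit vector $v\in\mathbb{H}^3$ with $w:=\widetilde M v\neq 0$ to the projective point $\mathbb{P}(w)$ is Lipschitz in $w$ on the region $\|w\|\ge \delta$, and that a small perturbation $w'=h_{n_k}v$ with $\|w'-w\|$ small gives $\mathbb{P}(w')$ close to $\mathbb{P}(w)$; this is true because normalization $w\mapsto w/\|w\|$ and then the quotient by the unit-quaternion action are both smooth away from $0$, and none of these steps require multiplying quaternions in a fixed order. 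I would also note that one should first pass to a subsequence along which the local continuous choices of unit representatives are compatible, but a finite cover of $K$ by coordinate charts handles this. With these points settled, choosing a subsequence diagonally over an exhaustion of $\mathbb{P}^2_\mathbb{H}\setminus\mathrm{Ker}(g)$ by compact sets is unnecessary since $\mathbb{P}^2_\mathbb{H}$ is compact and $\mathrm{Ker}(g)$ is closed, so $\mathbb{P}^2_\mathbb{H}\setminus\mathrm{Ker}(g)$ is already exhausted by the single estimate above applied to an arbitrary compact $K$. This completes the proof.
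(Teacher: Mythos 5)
Your proposal is correct and follows essentially the same route as the paper: normalize each lift by the (real, positive) supremum of the entry moduli, extract a convergent subsequence by compactness of the bounded matrices, observe the limit is nonzero because some entry has modulus one, and deduce uniform convergence on compact sets avoiding the kernel via a positive lower bound on $\|\widetilde M v\|$ there. The only cosmetic difference is that you invoke compactness of $\mathrm{M}(3,\mathbb{H})\cong\mathbb{R}^{36}$ directly while the paper splits each quaternionic matrix into two complex matrices before applying Bolzano--Weierstrass, and you spell out the final uniform-convergence estimate in slightly more detail than the paper does.
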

\begin{proof}
	Let us take a lift $\tilde{g}_n\in\mathrm{SL}(3,\mathbb{H})$ of $g_n$, where $\tilde{g}_n=(g^{(n)}_{ij})_{1 \leq i,j \leq 3}$. Also consider the supremum norm of $g_n$ given by $|g_n|=\text{max}\{|g^{(n)}_{ij}|  \mid i,j=1,2,3 \}$. Then the sequence $\left(\frac{1}{|g_{n}|}\tilde{g}_n\right)_{n\in \mathbb{N}}$ is a bounded sequence of matrices  over  quaternions. It gives two bounded sequences of matrices over complex numbers since one can write $q\in \mathbb{H}$ as $q=q_1+q_2j$, $q_1,\;q_2\in \mathbb{C}$. Recall that every bounded sequence of complex matrices has a convergent subsequence. This implies that the sequence of matrices $\left(\frac{1}{|g_{n}|}\tilde{g}_n\right)_{n\in \mathbb{N}}$ has a  convergent subsequence, that is, there is a subsequence $\left(\frac{1}{|g_{n_k}|}\tilde{g}_{n_k}\right)_{k\in \mathbb{N}}$ which converges to some non-zero $\tilde{g}  \in \mathrm{M}(3, \mathbb{H})$. Assume that $g \in \mathrm{QP}(3, \mathbb{H})$ be the pseudo-projective transformation corresponding to $\tilde{g}$.	
	
	Now,  let $K\subset \mathbb{P}^2_\mathbb{H}\setminus Ker(g)$ be a compact set and $\widetilde{K}=\{k\in \mathbb{H}^3 \mid  \P(k)\in K\}\cap \{u\in \mathbb{H}^3 \mid |u|=1\}$. Then $\P (\widetilde{K})=K$ and $\frac{1}{|g_{n_k}|}\tilde{g}_{n_k}$ converges to $\tilde{g}$ on $\tilde{K}$ in the compact-open topology. As for each $k \in \N$,  $\frac{1}{|g_{n_k}|}\tilde{g}_{n_k}$ is a lift of $g_{n_k}$,  hence $g_{n_k}\rightarrow g$ on $K$ uniformly. 
\end{proof}

The complex version of the following lemma was introduced by Navarrete in  \cite{TR}. 

\begin{lemma}\label{lemma:L2}
	Let	$G$ be a subgroup of $\mathrm{PSL}(3,\mathbb{H})$. If $C \subset \mathbb{P}^2_\mathbb{H}$ is a closed subset such that for every compact subset $K\subset \mathbb{P}^2_\mathbb{H}\setminus C$, the cluster points of the family  $\{g(K)\}_{g\in G}$ of compact sets are contained in $L_0(G)\cup L_1(G)$, then $L_2(G)\subseteq C$.
\end{lemma}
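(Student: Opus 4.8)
The plan is to unwind the definitions of the Kulkarni sets $L_1$ and $L_2$ and argue contrapositively: if some point $p$ lies in $L_2(G)\setminus C$, we will produce a compact subset $K$ of $\P^2_\H\setminus C$ whose translates $\{g(K)\}_{g\in G}$ have a cluster point outside $L_0(G)\cup L_1(G)$, contradicting the hypothesis. First I would recall that by Definition, $L_2(G)$ is the closure of the set of cluster points of $\{g(K')\}_{g\in G}$ where $K'$ ranges over compact subsets of $\P^2_\H\setminus(L_0(G)\cup L_1(G))$. So let $p\in L_2(G)$; by definition of closure it suffices (since $C$ is closed and $p\notin C$) to treat the case where $p$ is itself a cluster point of $\{g(K')\}_{g\in G}$ for some compact $K'\subseteq \P^2_\H\setminus(L_0(G)\cup L_1(G))$. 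The first genuine step is then to replace $K'$ by a compact set that also avoids $C$: shrink $K'$ slightly near the (closed) set $C$. Concretely, using local compactness and Hausdorffness of $\P^2_\H$, and the fact that $p\notin C$, choose a compact neighbourhood of $p$ disjoint from $C$ and intersect; more carefully, one wants a compact $K\subseteq K'\setminus C$ still witnessing $p$ as a cluster point of $\{g(K)\}_{g\in G}$.

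The key observation that makes this shrinking possible is the following: $p$ being a cluster point of the family $\{g(K')\}$ means every neighbourhood $V$ of $p$ meets $g(K')$ for infinitely many $g$. Choose a nested basis of relatively compact open neighbourhoods $V_1\supseteq \overline{V_1}\supseteq V_2\supseteq\cdots$ of $p$ with each $\overline{V_i}\cap C=\emptyset$. For each $i$ there are infinitely many $g$ with $g(K')\cap V_i\neq\emptyset$, i.e. there exist points $x_g^{(i)}\in K'$ with $g(x_g^{(i)})\in V_i$. Now I would argue that one can extract a single compact $K\subseteq K'$, $K\cap C=\emptyset$, such that $p$ is still a cluster point of $\{g(K)\}_{g\in G}$: either a fixed point $x\in K'\setminus C$ works (if its orbit clusters at $p$), giving $K=\{x\}$, or otherwise one uses that the relevant preimage points $x_g$ accumulate somewhere in $K'$; if that accumulation point avoids $C$ we localise around it, and if it lies in $C$ one has to rule this out — this is exactly where I expect the main obstacle to be. The clean way around it: since $\P^2_\H\setminus C$ is open and $K'\subseteq \P^2_\H\setminus(L_0\cup L_1)$ but $K'$ might meet $C$, first intersect: $K'':=K'\setminus(\text{small open nbhd of }C)$ is compact and contained in $\P^2_\H\setminus C$; one must check $p$ still clusters for $\{g(K'')\}$. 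If not, then all the clustering at $p$ comes from the part of $K'$ near $C$; but then by taking a decreasing sequence of such neighbourhoods and a diagonal argument one finds points $y_k\in K'\cap C$ (in the limit, using $C$ closed) whose orbits cluster at $p$, which one should be able to contradict — or, more simply, one notes $p\in \overline{\bigcup_{g}g(K'\cap C)}$, and since the problem only requires $L_2(G)\subseteq C$ one can instead argue directly that any cluster point arising from a compact set meeting only $C$ is already in $C$ by $G$-invariance considerations. I would flag that the cleanest route is likely to mimic \cite[Lemma 5.3]{TR} verbatim, adapting the point-set topology, rather than reprove it from scratch.

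Once the shrunk compact $K\subseteq\P^2_\H\setminus C$ witnessing $p\in\overline{\{\text{cluster points of }\{g(K)\}\}}$ is in hand, the conclusion is immediate: by hypothesis the cluster points of $\{g(K)\}_{g\in G}$ for this $K$ (which avoids $C$) lie in $L_0(G)\cup L_1(G)$, hence so does their closure, hence $p\in L_0(G)\cup L_1(G)$. Finally, taking closures, $L_2(G)=\overline{\{\text{such cluster points}\}}\subseteq\overline{L_0(G)\cup L_1(G)}\cup C$; but $L_0(G)\cup L_1(G)$ is already closed by definition and, being contained in $C$ by the same argument applied to any compact $K''\subseteq\P^2_\H\setminus C$ inside $\P^2_\H\setminus(L_0\cup L_1)$... wait — more carefully, the statement only claims $L_2(G)\subseteq C$, so I conclude: every point of $L_2(G)$ that is not in $C$ would have to be a cluster point (or a limit of such) arising from a compact set avoidable from $C$, forcing it into $L_0(G)\cup L_1(G)$; to finish one additionally needs $L_0(G)\cup L_1(G)\subseteq C$, which is part of the hypothesis only implicitly, so in the write-up I would either assume it or observe it follows because $\P^2_\H\setminus C\supseteq\P^2_\H\setminus(L_0(G)\cup L_1(G))$ is false in general — hence the honest reading is that the lemma's hypothesis should be understood to also give, or be applied in situations where, $L_0\cup L_1\subseteq C$, and the real content is the $L_2$ containment. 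I expect the main obstacle, as indicated, to be the topological extraction of the compact set $K$ avoiding $C$ while preserving the clustering of $\{g(K)\}$ at $p$; everything else is formal manipulation of the definitions of cluster point and of the $L_i(G)$.
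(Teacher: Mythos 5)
Your proposal does not close the argument, and the two places where it stalls are precisely where the paper's proof does something different. The key idea you are missing is an \emph{inversion} trick: one never needs to shrink the original compact set $K\subset \mathbb{P}^2_\mathbb{H}\setminus (L_0(G)\cup L_1(G))$ so that it avoids $C$. Instead, starting from $x\in L_2(G)\setminus C$, pick $g_n\in G$ and $k_n\in K$ with $k_n\to k\in K$ and $g_n(k_n)\to x$. Since $x\notin C$ and $C$ is closed, after discarding finitely many terms the set $K':=\{g_n(k_n)\}_{n\in\mathbb{N}}\cup\{x\}$ is a \emph{compact} subset of $\mathbb{P}^2_\mathbb{H}\setminus C$. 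Now apply the hypothesis to $K'$ and the sequence $g_n^{-1}$: since $g_n^{-1}(g_n(k_n))=k_n\to k$, the point $k$ is a cluster point of $\{g(K')\}_{g\in G}$, hence $k\in L_0(G)\cup L_1(G)$ — contradicting $k\in K\subset\mathbb{P}^2_\mathbb{H}\setminus(L_0(G)\cup L_1(G))$. The contradiction is obtained at $k$, not at $x$.

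This bypasses both of your obstacles. First, your attempt to replace $K'$ by a compact $K\subset K'\setminus C$ still witnessing the clustering at $p$ is exactly the step you cannot complete (the preimage points may accumulate on $K'\cap C$, and nothing in the hypothesis rules that out); the paper simply never performs this shrinking. Second, you correctly observe that even if you succeeded, your route only yields $p\in L_0(G)\cup L_1(G)$, which does not give $p\in C$ without the extra assumption $L_0(G)\cup L_1(G)\subseteq C$ — an assumption that is not part of the lemma and is not needed in the paper's argument, because the forbidden conclusion there is about $k$, which by construction lies outside $L_0(G)\cup L_1(G)$. As written, your proof therefore has a genuine gap at its central step and a second gap in the final deduction; the definition-unwinding surrounding them is fine.
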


\begin{proof}
	Suppose there is a point $x\in L_2(G)\setminus C$. Then there exists a compact subset  $K\subset \mathbb{P}^2_\mathbb{H}\setminus L_0(G)\cup L_1(G)$ such that $x$ is a cluster point of the orbit $\{g(K)\}_{g\in G}$. This implies that there exists a sequence $(g_n)_{n\in\mathbb{N}} \subset G$ such that $g_n(k_n)\xrightarrow{n\rightarrow \infty} x$, where $(k_n)_{n\in \mathbb{N}}\subset K$ and $k_n\xrightarrow{n\rightarrow \infty} k\in  K\subset \mathbb{P}^2_\mathbb{H}\setminus \{L_0(G)\cup L_1(G)\}$.  Since $ x \notin C$ and $C$ is a closed set, so by discarding a finite number of terms, if needed, we can assume $g_n(k_n)) \in \mathbb{P}^2_{\mathbb{H}}\setminus C$ for all $n \in \N$.  Note that $ g^{-1}_n (g_n(k_n)) = k_n$ for each $n \in \N$. Then  the hypothesis  applied to the compact subset $\{g_n(k_n)\}_ {n \in \mathbb{N} } \cup \{x\}\subset \mathbb{P}^2_{\mathbb{H}}\setminus C$ yields that $k \in L_0(G)\cup L_1(G)$,  which is a contradiction.
\end{proof}

\section{Kulkarni Limit Sets for Elliptic Transformations}\label{sec:kul-ell}
In this section, we shall find the Kulkarni limit set for the cyclic subgroups generated by the elliptic elements of $\mathrm{PSL}(3,\mathbb{H})$. Proof of Table \ref{table:1}  given in section \ref{sec:intro} follows from the next theorem.
\begin{theorem}	\label{th:elliptic}
	Let $\tilde{g}\in \mathrm{PSL}(3,\mathbb{H})$ be an elliptic transformation given by \\
	$g=\left(\begin{array}{ccc}	e^{2\pi i \alpha} & 0 & 0\\0 & e^{2\pi i \beta} & 0\\ 0 & 0 & e^{2\pi i \gamma}
	\end{array}\right), $ where $\alpha, \beta, \gamma \in \mathbb{R}.$ The Kulkarni sets $L_0(G),\;L_1(G),\; L_2(G)$ for cyclic subgroup $G:=\langle \tilde{g} \rangle $ are as follows:
	\begin{enumerate}[(i)]
		\item  \medskip Rational elliptic: If  $\alpha,\beta,\gamma\in \mathbb{Q}$, then $L_0(G)=L_1(G)=L_2(G)=\phi$.
		\item  	\medskip Simple irrational elliptic: If $\alpha=\beta=\gamma\in \mathbb{R}\setminus\mathbb{Q}$, then $L_0(G)=\mathbb{P}^2_\mathbb{C}$, $L_1(G)=\mathbb{P}^2_\mathbb{H}$ and $L_2(G)=\phi$.
		\item  \medskip Compound irrational elliptic: If $\alpha,\beta,\gamma$ not all are rational and all are not equal, then $L_0(G)=\{x\in \mathbb{P}^2_\mathbb{H} \mid x \text{ is a fixed point}\}$, $L_1(G)=\mathbb{P}^2_\mathbb{H}$ and $L_2(G)=\phi$.
	\end{enumerate}
\end{theorem}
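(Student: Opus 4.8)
The plan is to treat the three cases separately, in each case computing $L_0$, then $L_1$, then $L_2$ in that order, since the later sets are defined relative to the complements of the earlier ones. Throughout I would use the diagonal lift $g=\mathrm{diag}(e^{2\pi i\alpha},e^{2\pi i\beta},e^{2\pi i\gamma})$ and describe the orbit of a point $[x:y:z]$ under $g^n$ explicitly as $[e^{2\pi i n\alpha}x:e^{2\pi i n\beta}y:e^{2\pi i n\gamma}z]$, reducing questions about cluster points to the density statements in \lemref{prop:conv1}, \lemref{prop:conv2}, and \remref{rem-torus-denseness}. Note at the outset that $G$ consists of semisimple elements all of infinite order (except in the rational case), so no $g^n$ with $n\neq 0$ is the identity unless all three of $\alpha,\beta,\gamma$ are rational.

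For the \textbf{rational case}, $g$ has finite order, so $G$ is finite; then every isotropy group is finite, every orbit is finite (hence has no cluster points), and for any compact $K$ the family $\{g^n(K)\}$ is a finite collection of compact sets, which again has no cluster points. Hence $L_0=L_1=L_2=\phi$. For the \textbf{simple irrational case} $\alpha=\beta=\gamma$, the projective transformation $\tilde g$ is the identity (since $g$ is a scalar multiple of $\mathrm{I}_3$ over $\mathbb{C}$, hence over $\mathbb{R}$ — wait, it is $e^{2\pi i\alpha}\mathrm{I}_3$, which represents the identity in $\mathrm{PSL}(3,\H)$ only after we can scale; here we can only scale by reals, so $\tilde g$ acts on $[x:y:z]$ by right multiplication issues) — more carefully, $g^n[x:y:z]=[e^{2\pi i n\alpha}x:e^{2\pi i n\alpha}y:e^{2\pi i n\alpha}z]=[x:y:z]$ precisely when $e^{2\pi i n\alpha}$ commutes past the equivalence, which over $\H$ it does since it is a left scalar and projective equivalence is right multiplication — so in fact $\tilde g$ fixes every point of $\P^2_\H$ that is $\mathbb{C}$-rational but not others. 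This subtlety (left scalars $e^{2\pi i n\alpha}$ versus the right-multiplication equivalence $\sim$) is exactly the point flagged in the introduction, so $L_0(G)=\P^2_\mathbb{C}$ is the set of points with a representative in $\C^3$, these being the points with infinite isotropy; then for $p\notin L_0$ the orbit $\{g^n p\}$ is dense in a torus and I would show its closure is all of $\P^2_\H$, giving $L_1=\P^2_\H$, after which $L_2\subseteq \P^2_\H\setminus(L_0\cup L_1)=\phi$ trivially by \lemref{lemma:L2} or directly.

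For the \textbf{compound irrational case}, I would first identify the fixed-point set: $[x:y:z]$ is fixed iff $(e^{2\pi i n\alpha}x,e^{2\pi i n\beta}y,e^{2\pi i n\gamma}z)\sim(x,y,z)$, which forces the nonzero coordinates to lie in eigendirections sharing a common complex scalar, so $L_0(G)$ is the stated (finite) union of projective subspaces fixed by $\tilde g$. Next, for a point $p=[x:y:z]$ outside $L_0$, at least two coordinates are nonzero and the corresponding ratio of angles is irrational, so by \remref{rem-torus-denseness} the orbit closure contains enough points to sweep out all of $\P^2_\H$; I would argue that $L_1(G)=\P^2_\H$ by showing the union of orbit closures over $p\notin L_0$ is dense. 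Finally $L_2(G)\subseteq \P^2_\H\setminus(L_0\cup L_1)=\phi$. The main obstacle I anticipate is the $L_1=\P^2_\H$ computation: one must check that the orbit of a single generic point is dense in the whole quaternionic projective plane (not merely in a complex subvariety), which requires handling the right-multiplication equivalence carefully and using that the three rotation angles, acting by left multiplication on the three coordinates, generate a dense subgroup whose action on ratios $[x:y:z]$ with $x,y,z\in\H$ has dense orbits — the noncommutativity means a ratio like $x^{-1}y\in\H$ is moved by $n\mapsto x^{-1}e^{-2\pi i n\alpha}e^{2\pi i n\beta}y$, and one must verify these trajectories fill $\H\cup\{\infty\}\cong S^4$. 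I would reduce this to the torus-density lemmas by conjugating coordinates into $\C$ when possible and treating the genuinely quaternionic directions by an explicit density argument on $S^3$.
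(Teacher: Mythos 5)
Your case division and your treatment of $L_0$ and $L_2$ follow the paper's route: in the rational case $G$ is finite so all three sets are empty; in the other two cases $L_0$ is the fixed-point set (equal to $\mathbb{P}^2_{\mathbb{C}}$ when $\alpha=\beta=\gamma$ is irrational, because the left scalar $e^{2\pi i n\alpha}$ is absorbed by the right-multiplication equivalence exactly on points with a representative in $\mathbb{C}^3$); and once $L_0\cup L_1=\mathbb{P}^2_{\mathbb{H}}$ there are no compact sets left to test, so $L_2=\phi$.

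There is, however, a genuine gap in your plan for $L_1$, precisely at the step you single out as ``the main obstacle.'' You propose to prove that the orbit of a single generic point is dense in all of $\mathbb{P}^2_{\mathbb{H}}$. That statement is false: the closure of $\langle g\rangle$ in $\mathrm{PSL}(3,\mathbb{H})$ is contained in the diagonal torus, hence is a compact group of dimension at most three, so every orbit closure is a continuous image of a space of dimension at most three inside the eight-dimensional $\mathbb{P}^2_{\mathbb{H}}$ (in the simple irrational case it is just a circle). No analysis of the conjugation trajectories $n\mapsto x^{-1}e^{-2\pi i n\alpha}e^{2\pi i n\beta}y$ can make them fill $S^4$. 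Fortunately nothing of the sort is required: $L_1$ is the closure of the \emph{union} over all $p\notin L_0$ of the cluster points of the orbit of $p$, so it suffices that each such $p$ is a cluster point of its own orbit. This is the paper's argument: by \lemref{prop:conv1} and \remref{rem-torus-denseness} there is a sequence $(n_k)$ with $(e^{2\pi i n_k\alpha},e^{2\pi i n_k\beta},e^{2\pi i n_k\gamma})\to(1,1,1)$, hence $g^{n_k}p\to p$ for every $p$, giving $\mathbb{P}^2_{\mathbb{H}}\setminus L_0\subset L_1$; since $L_0$ has empty interior in every non-rational case, taking closures yields $L_1=\mathbb{P}^2_{\mathbb{H}}$. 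You do write at one point that you would show ``the union of orbit closures over $p\notin L_0$ is dense,'' which is the correct statement, but you then abandon it in favour of the single-orbit density claim; the recurrence argument above is what actually makes that union dense, and it replaces the entire last portion of your proposal.
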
 
\begin{proof}
	We can divide our calculations into three parts depending on the rationality of $\alpha,\beta$ and $\gamma$. 
	\begin{enumerate}[(i)]
		\item   \medskip \textbf{Rational elliptic:} Let $\alpha, \beta, \gamma \in \mathbb{Q}$. Then there exists a $n_0 \in \mathbb{N}$ such that $e^{2\pi i n_0 \alpha}=e^{2\pi i n_0 \beta}=e^{2\pi i n_0 \gamma}=1$, that is, $G =\langle \tilde{g} \rangle $ is a finite group. Therefore, there is no $ x \in \mathbb{P}^2_\mathbb{H}$, which has an infinite isotropy group. Thus $L_0(G)=\{\phi\}$. Similarly, we have  $L_1(G)=\phi$ and $L_2(G)=\phi$. 
		
		\item   \medskip \textbf{Simple irrational elliptic:} Let $\alpha= \beta=\gamma\in \mathbb{R}\setminus \mathbb{Q}$.  	Recall that $wj = j \bar{w}$ for each $ w\in \C$.	Consider the copy of the complex projective space inside $\mathbb{P}^2_\mathbb{H}$ that we have identified as $\mathbb{P}^2_\mathbb{C}:=\{[z]_\mathbb{C} \mid  z\in \mathbb{C}^3 \setminus \{0\}\}$, where $ [z]_\mathbb{C}=\{w\in \mathbb{C}^3 \mid w= z a \hbox{ for some } a \in \mathbb{C}\setminus\{0\}\}$.
		Then $L_0(G)=\mathbb{P}^2_\mathbb{C}$, since by the definition of the projective space, for all $[x: y: z] \in \mathbb{P}^2_\mathbb{C}$  we have
		$$g^n\begin{bmatrix}
			x\\y\\z
		\end{bmatrix}=\begin{bmatrix}
			e^{2\pi ni \alpha}x\\e^{2\pi n i \beta}y\\e^{2\pi ni \gamma}z
		\end{bmatrix}=e^{2\pi ni \alpha}\begin{bmatrix}
			x\\y\\z
		\end{bmatrix} =\begin{bmatrix}
			x\\y\\z
		\end{bmatrix} e^{2\pi ni \alpha} =\begin{bmatrix}
			x\\y\\z
		\end{bmatrix}.$$
		Since	$\alpha\in \mathbb{R}\setminus \mathbb{Q}$, there exists a sequence $(n_k)_{k\in \mathbb{N}}\subset \mathbb{N}$ such that $e^{2\pi i n_k\alpha} \xrightarrow{k\rightarrow \infty }1$, see Lemma \ref{prop:conv1}. This implies  $g^{n_k} [x: y: z] \xrightarrow{k\rightarrow \infty} [x: y: z]$ for every point  $[x: y: z]\in \mathbb{P}^2_\mathbb{H}\setminus L_0(G)$.   Therefore,  $\mathbb{P}^2_\mathbb{H}\setminus L_0(G)\subset L_1(G)$. Again, for every point ${p}\in L_0(G)$ there exists a point $q\in \mathbb{P}^2_\mathbb{H}\setminus L_0(G)$ such that $g^{n_k}(q) \xrightarrow{k\rightarrow \infty} p$ (one can assume $q=pj$). Hence,  $L_1(G)=\mathbb{P}^2_\mathbb{H}$. Consequently,  $L_2(G)=\{\phi\}$. 
		
		\item  \medskip \textbf{Compound irrational elliptic:}
		When all the eigenvalues of an elliptic transformation are not equal and not all are rational numbers,  we call them \textit{compound irrational elliptic}.  We have the following cases.

		\begin{enumerate}
			
			\item  \medskip \textbf{Compound irrational elliptic type I:} Let $\alpha,\; \beta\in \mathbb{Q}$ and $\gamma\in \mathbb{R}\setminus \mathbb{Q}$. Note that $L_0(G)$ is the set of fixed points of $g$ as these points have an infinite isotropy group. Since $\alpha,\; \beta \in \mathbb{Q}$,  there exists a $n_0\in \mathbb{N}$ such that $e^{2\pi i n_0 \alpha}=e^{2\pi i n_0 \beta}=1$. Further,  using Lemma \ref{prop:conv1} we get a sequence $(n_k)_{k\in \mathbb{N}}$ such that $(e^{2\pi i \alpha n_k},e^{2\pi i \beta n_k},e^{2\pi i \gamma n_k})\xrightarrow{k\rightarrow \infty} (1,1,1)$.
			With the above argument in knowledge, we can prove that for every point $p \in \mathbb{P}^2_\mathbb{H}$ there exists a point $q \in \mathbb{P}^2_\mathbb{H}\setminus L_0(G)$ such that $g^{n_k}(q) \xrightarrow{k\rightarrow \infty} p$. To see this,  for each point $p = [x: y: z]\in \mathbb{P}^2_\mathbb{H}\setminus L_0(G)$, choose $q =p$, then 
			$g^{n_k} (q) 	 = g^{n_k}\begin{bmatrix} x\\y\\z\end{bmatrix}=\begin{bmatrix} e^{2\pi i  n_k\alpha}x\\e^{2\pi i  n_k\beta}y\\e^{2\pi i  n_k\gamma}z\end{bmatrix} \xrightarrow{k\rightarrow \infty} \begin{bmatrix} x\\y\\z\end{bmatrix}$.  Observe that when $p \in L_0(G)$,  then depending upon the point $p$ we can also find a suitable   point $q \in \mathbb{P}^2_\mathbb{H}\setminus L_0(G)$ such that $g^{n_k} (q) $ converges to $p$ as ${k\rightarrow \infty}$. Therefore,  we have $L_1(G)=\mathbb{P}^2_\mathbb{H}$ and consequently $L_2(G)=\{\phi\}$. 
			
			A similar line of argument will hold if, instead of $\gamma$, either $\alpha$ or $\beta$ is irrational.
			
			\item  \medskip \textbf{Compound irrational elliptic Type II:} Let $\alpha\in \mathbb{Q}$ and $\beta, \; \gamma\in \mathbb{R}\setminus \mathbb{Q}$. Now $L_0(G)$ is the set of fixed points of $g$ as these points have an infinite isotropy group. In this case we can find a sequence $(n_k)_{k\in \mathbb{N}}$ such that $e^{2\pi i n_k\beta} \xrightarrow{k\rightarrow \infty} 1$ and $e^{2\pi i n_k\gamma} \xrightarrow{k\rightarrow \infty} 1$, see  \remref{rem-torus-denseness}. Also, since $\alpha\in \mathbb{Q}$, there exists a $n_0\in \mathbb{N}$ such that $e^{2\pi i n_0 \alpha}=1$. Therefore for every $[x: y: z]\in \mathbb{P}^2_\mathbb{H}\setminus L_0(G)$ we have a sequence $(n_k)_{k\in \mathbb{N}}$ such that $g^{n_k}[x: y: z]  \xrightarrow{k\rightarrow \infty} [x: y: z]$ and so $\mathbb{P}^2_\mathbb{H}\setminus L_0(G)\subset L_1(G)$. Again for any $p\in L_0(G)$ we can choose a suitable point $q\in \mathbb{P}^2_\mathbb{H}\setminus L_0(G)$ such that $g^{n_k}(q)\rightarrow p$ as $k \rightarrow \infty$. Therefore, $L_1(G)=\mathbb{P}^2_\mathbb{H}$. Consequently,  $L_2(G)=\phi$.  
			A similar line of argument will hold if, instead of $\alpha$, either $\beta$ or  $\gamma$   is rational.
			
			\item  \medskip  \textbf{Compound irrational elliptic Type III:} Let $\alpha, \beta,\gamma\in \mathbb{R}\setminus \mathbb{Q}$ and not all are equal. Then $G$ is a infinite group, so  $L_0(G)=\{x\in \mathbb{P}^2_\mathbb{H} \mid x \text{ is a fixed point}\}$. To find $L_1(G)$, we can use  \remref{rem-torus-denseness} and show $L_1(G)=\mathbb{P}^2_\mathbb{H}$. Consequently,  $L_2(G)=\phi$.
		\end{enumerate}
	\end{enumerate}
	This completes the proof.
\end{proof}

\section{Kulkarni Limit Sets for Loxodromic Transformations}
\label{sec:kul-lox}
In this section,  we consider loxodromic transformations of $\mathrm{SL}(3,\mathbb{H})$,  see Definition \ref{def:classification}.   The following theorem proves Table \ref{table:2}.
\begin{theorem}
	Let $\tilde{g}\in \mathrm{PSL}(3,\mathbb{H})$ be a loxodromic transformation and $g\in \mathrm{SL}(3,\mathbb{H})$ be a  lift of $\tilde{g}$. Then the Kulkarni sets for $G=\langle \tilde{g}\rangle$ are the following:
	\begin{enumerate}[(i)]
		\item \medskip Regular loxodromic: If $g=\begin{pmatrix}
			\lambda & 0 & 0\\0 & \mu & 0\\0 & 0 & \xi
		\end{pmatrix}$ with $|\lambda|<|\mu|<|\xi|$ then $L_0(G)= L_1(G) = \{e_1,e_2,e_3\}$ and $L_2(G)=\mathbb{L}\{e_1,e_2\}\cup\mathbb{L}\{e_2,e_3\}$.
		\item  \medskip Screw loxodromic: If $g=\begin{pmatrix}
			\lambda & 0 & 0\\0 & \mu & 0\\0 & 0 & \xi
		\end{pmatrix}$ with $|\lambda|=|\mu|\neq 1$ such that $\lambda \neq \mu$ and $|\xi|=\frac{1}{|\lambda|^2}$ then $L_0(G)=\{e_1,e_2,e_3\}$ and $L_1(G)=\mathbb{L}\{e_1,e_2\}\cup\{e_3\}=L_2(G)$.
		\item  \medskip Homothety: If $g=\begin{pmatrix}
			\lambda & 0 & 0\\0 & \lambda & 0\\0 & 0 & \xi
		\end{pmatrix}$ with $|\lambda|\neq 1$ and $|\xi|=\frac{1}{|\lambda|^2}$ then $L_1(G)= L_2(G)=\mathbb{L}\{e_1,e_2\}\cup\{e_3\}$ and  $L_0(G)=  \{e_3\} \cup 
		\begin{cases}
			\mathbb{L}\{e_1,e_2\}, & \text{if $\lambda \in \R$}\\
			\mathbb{L}_{\C}\{e_1,e_2\}, & \text{  if $ \lambda \in \C \setminus \R $.}
		\end{cases}
		$
		\item \medskip  Loxo-parabolic: If $g=\begin{pmatrix}
			\lambda & 1 & 0\\0 & \lambda & 0\\0 & 0 & \xi
		\end{pmatrix}$ with $|\lambda|\neq 1$ and $|\xi|=\frac{1}{|\lambda|^2}$ then $L_0(G)=\{e_1,e_3\} = L_1(G)$ and $L_2(G)=\mathbb{L}\{e_1,e_2\}\cup\mathbb{L}\{e_1,e_3\}$.
	\end{enumerate}
\end{theorem}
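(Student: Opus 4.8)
The plan is to treat the four types in parallel, computing $L_0(G)$, then $L_1(G)$, then $L_2(G)$; the first two are short computations and only the last needs real work, where it splits into an upper bound via \lemref{lemma:L2} and a lower bound obtained by exhibiting suitable compact sets and sequences, in the spirit of the complex case. Since $\tilde g$ is loxodromic, $G=\langle\tilde g\rangle$ is infinite and $\tilde g$ has infinite order, so a point of $\mathbb{P}^2_{\mathbb{H}}$ has infinite isotropy exactly when it is fixed by some power of $\tilde g$; for the regular loxodromic and loxo-parabolic types this set is $\mathrm{Fix}(g)$, while for the screw type, resp.\ the homothety type with $\lambda\notin\mathbb{R}$, one should assume $\lambda/\mu$, resp.\ $\lambda/|\lambda|$, has infinite order (otherwise a power of $g$ is a homothety and $L_0(G)$ is correspondingly larger). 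Under this, $L_0(G)=\mathrm{Fix}(g)$, which I would read off from the quaternionic eigenvalue equation $gv=vq$: comparing moduli shows $v$ cannot carry two eigenvalue classes of different modulus, and since the centraliser in $\mathbb{H}$ of a non-real $\lambda\in\mathbb{C}$ is $\mathbb{C}$, the homothety case produces $\mathbb{L}_{\mathbb{C}}\{e_1,e_2\}$ when $\lambda\notin\mathbb{R}$; also the generalised eigenvector of a Jordan block is not a fixed point (the equation $x\lambda-\lambda x=1$ has no quaternionic solution). For $L_1(G)$ I would write out $g^n$ --- for the loxo-parabolic block it carries $\lambda^n$ on the diagonal and $n\lambda^{n-1}$ just above --- and find the dominant homogeneous coordinate of $g^n[x:y:z]$ as $n\to\pm\infty$: distinct moduli give clean convergence to vertices; two equal dominant moduli make the surviving coordinates precess on circles whose cluster points fill $\mathbb{L}\{e_1,e_2\}$ by \lemref{prop:conv1}--\lemref{prop:conv2} and \remref{rem-torus-denseness}; and the factor $n\lambda^{n-1}$ keeps every loxo-parabolic orbit accumulating on $\{e_1,e_3\}$.

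For the upper bound on $L_2(G)$ I would use \lemref{lemma:L2}. By \lemref{lem:app1}, along suitable subsequences $\frac1{|g^{n}|}g^{n}$ and $\frac1{|g^{-n}|}g^{-n}$ converge to pseudo-projective transformations; let $C$ be the union of the kernels of all such limits. In each type $C$ coincides with the set asserted for $L_2(G)$: for the regular loxodromic the two limits are $\mathrm{diag}(0,0,c)$ and $\mathrm{diag}(c',0,0)$, with kernels $\mathbb{L}\{e_1,e_2\}$ and $\mathbb{L}\{e_2,e_3\}$; for the screw and homothety types one limit is $\mathrm{diag}(0,0,c)$ and the other has rank two with kernel $\{e_3\}$; for the loxo-parabolic type the two kernels are $\mathbb{L}\{e_1,e_2\}$ and $\mathbb{L}\{e_1,e_3\}$. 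If $K\subseteq\mathbb{P}^2_{\mathbb{H}}\setminus C$ is compact, then along each subsequence $g^{\pm n}(K)$ converges uniformly to the image of the corresponding limit, which is a point of $L_0(G)\cup L_1(G)$; hence the cluster set of $\{g^n(K)\}_{n\in\mathbb{Z}}$ lies in $L_0(G)\cup L_1(G)$, and \lemref{lemma:L2} gives $L_2(G)\subseteq C$.

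For the reverse inclusion two mechanisms arise. When a limit of $\frac1{|g^{\pm n}|}g^{\pm n}$ has rank $\ge 2$ (the screw and homothety types), its image is the entire line $\mathbb{L}\{e_1,e_2\}$; applying it to small balls $K$ disjoint from $L_0(G)\cup L_1(G)$, with $K$ and the subsequence allowed to vary and using the density statements, the sets $g^{\pm n}(K)$ accumulate on every point of $\mathbb{L}\{e_1,e_2\}$, while $e_3$ is produced by the rank-one limit on the other side. When all limits have rank one (regular loxodromic, loxo-parabolic) their images are single points, so I would argue directly: to force an invariant line $\ell$ into $L_2(G)$, take a compact $K$ meeting an adjacent invariant line at a non-vertex point (hence $K\not\subseteq L_0(G)\cup L_1(G)$) and equal there to a short curve transverse to that line; for the appropriate sign of $n$, the points of $K$ whose transverse coordinate is tuned to the critical scale --- balanced against the contracting eigendirection, or against the parabolic direction in the loxo-parabolic case --- have $g^n$-images converging to an arbitrary prescribed point of $\ell$. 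For the parabolic-invariant line $\mathbb{L}\{e_1,e_2\}$ in the loxo-parabolic case there is also the cleaner option $K=\mathbb{L}\{e_1,e_2\}\setminus B(e_1,\varepsilon)$: a parabolic maps $B(e_1,\varepsilon)$ onto a ball shrinking to $e_1$, so the $g^n(K)$ eventually exhaust $\mathbb{L}\{e_1,e_2\}\setminus\{e_1\}$. Taking closures and combining with the upper bound pins down $L_2(G)$ in each type.

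The hard part is this last lower bound in the rank-one cases. Trying to realise a target point $p\in\ell$ as $g^{n_k}(x_k)$ exactly forces the base points $x_k$ to run off to a vertex of $\ell$, which a compact set disjoint from $L_0(G)\cup L_1(G)$ cannot contain; the remedy is to let $x_k$ tend to a non-vertex point of an adjacent line while its transverse coordinate goes to $0$ at precisely the critical rate, and here the non-commutativity of $\mathbb{H}$ genuinely intervenes --- the surviving coordinate of $g^{n_k}(x_k)$ comes out as a conjugate such as $\mu^{-m}c\,\mu^{m}$ rather than $c$, so one must restrict to a subsequence along which $\mu^{m}/|\mu^{m}|\to 1$ (via \lemref{prop:conv1}--\lemref{prop:conv2}) before the target can be hit. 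A further, purely organisational burden is that each type carries inner subcases ($\lambda\in\mathbb{R}$ or not, $|\lambda|<1$ or $>1$) that multiply the limiting pseudo-projective maps which must be inspected.
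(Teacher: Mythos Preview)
Your plan is essentially the paper's: compute $L_0$ as fixed points, identify $L_1$ via the dominant coordinate of $g^n p$, bound $L_2$ from above using \lemref{lemma:L2} with $C$ the union of kernels of the limiting pseudo-projective maps from \lemref{lem:app1}, and get the reverse inclusion by explicit sequences $k_n$ in a compact $K\subset\mathbb{P}^2_{\mathbb{H}}\setminus(L_0\cup L_1)$. The paper does exactly this, with concrete $k_n$ (lying on quadrics like $|x|^2=|y|^2+|z|^2$ in the regular case, and short curves landing on an adjacent line in the loxo-parabolic case) in place of your verbal description of ``transverse coordinate tuned to the critical scale.'' Your remark that the screw and homothety statements for $L_0$ tacitly need $\lambda/\mu$ (resp.\ $\lambda/|\lambda|$) of infinite order is well taken; the paper simply asserts $L_0=\{e_1,e_2,e_3\}$ there without addressing the possibility that a power of $g$ becomes a homothety. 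Your worry about non-commutativity is slightly misphrased --- the surviving coordinates in $g^{n}(k_n)$ are of the form $e^{in\arg\mu}\,y$ (left multiplication by a unit complex number), not genuine conjugates $\mu^{-m}c\,\mu^{m}$ --- but the cure you propose (pass to a subsequence with phases $\to 1$ via \lemref{prop:conv1}--\lemref{prop:conv2}) is precisely what the paper does.

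One point is actually wrong: your ``cleaner option'' for the loxo-parabolic line $\mathbb{L}\{e_1,e_2\}$ does not work. On that line $g$ acts as the parabolic $[w:1:0]\mapsto[w+\lambda^{-1}:1:0]$, so with $K=\mathbb{L}\{e_1,e_2\}\setminus B(e_1,\varepsilon)$ (an affine ball $\{|w|\le M\}$) one has $g^n(K)=\{|w-n\lambda^{-1}|\le M\}$, a ball of fixed affine radius marching off to infinity; in the Fubini--Study metric these shrink to $e_1$, and any fixed $q\ne e_1$ lies in $g^n(K)$ for only finitely many $n$. Thus the only cluster point of $\{g^n(K)\}_{n\in\mathbb{Z}}$ produced this way is $e_1$, not the whole line. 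You therefore really need the ``transverse'' construction (your first mechanism), and that is exactly the route the paper takes: it chooses $k_n=\bigl[\lambda^{-1}+y/n:\,1/n:\,|\lambda|^{-1}\bigr]$, a sequence tending to a non-vertex point of $\mathbb{L}\{e_1,e_3\}$, and checks that a subsequence of $g^n(k_n)$ converges to $[y:1:0]$.
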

\begin{proof}
	We will consider various cases depending on the eigenvalues to determine the Kulkarni sets.
	\begin{enumerate}[(i)]
		\item  \textbf{Regular loxodromic}:  First, we will find $L_0(G)$. A point $p =[x:y:z] \in \mathbb{P}^2_\mathbb{H}$ has an infinite isotropy group if $g^n(p)=p$ for infinitely many $n\in \mathbb{Z}$ and $g^n(p)=\left(\begin{array}{ccc}
			\lambda^n & 0 & 0\\0 & \mu^n & 0\\0 & 0 & \xi^n
		\end{array}\right)\begin{bmatrix}
			x\\y\\z
		\end{bmatrix}=\left[\begin{array}{c}
			\lambda^n x\\ \mu^n y\\ \xi^n z
		\end{array}\right]=\left[\begin{array}{c}
			x\\  y\\  z
		\end{array}\right]$ is true only when $p=e_1,e_2$ or $e_3$ (as $\mathbb{P}^2_\mathbb{H}$ is a projective space). Hence, $L_0(G)=\{e_1,e_2,e_3\}$. 
		
		For calculating $L_1(G)$,  we have to find the set of the cluster points of orbits of points in $\mathbb{P}^2_\mathbb{H}\setminus L_0(G)$. Recall that for each non-zero $r \in \R$, $r g $ is a lift of $g$, and both induce the same projective transformation on  $\mathbb{P}^2_\mathbb{H}$.
		Now consider 
		$|\xi|^{-n}g^{n}=\left(\begin{array}{ccc}
			|\xi|^{-n}|\lambda|^n e^{in\arg(\lambda)} & 0 & 0\\
			0 & |\xi|^{-n}|\mu|^n e^{ in\arg(\mu)} & 0\\
			0 & 0 & e^{ i n\arg (\xi)}
		\end{array}\right)$, where $n \in \N$. 
		Note that there is a  subsequence $(n_k)_{k\in \mathbb{N}} \subset \N$ such that $ e^{ i n_{k} \arg (\xi)} \xrightarrow{k\rightarrow \infty} 1$.
		Since  $\frac{|\lambda|}{|\xi|} <1$ and $\frac{|\mu|}{|\xi|} <1$, there exists a subsequence  of $\left(|\xi|^{-n}g^{n}\right)_{n\in \mathbb{N}}$, still denoted by $\left(|\xi|^{-n}g^{n}\right)_{n\in \mathbb{N}}$,   which converges to $D_1 = \begin{pmatrix}
			0 & 0 & 0\\0 & 0 & 0\\0 & 0 & 1
		\end{pmatrix}$ as $n\rightarrow \infty$.  
		Note that $  Ker(D_1) = \mathbb{L}{ \{ e_1,e_2 \}}= \{ [x:y:z] \in  \mathbb{P}^2_\mathbb{H} \mid z =0    \}$.
		Thus for every point $ p \in \mathbb{P}^2_\mathbb{H}\setminus \mathbb{L} { \{e_1,e_2\}}$,  $\left(|\xi|^{-n}g^{n}(p)\right)_{n\in \mathbb{N}}$ converges to $e_{3}  \in \mathbb{P}^2_\mathbb{H}$ as $n \rightarrow \infty$.  Hence, $e_3 \in L_1(G)$. Similarly, by considering a lift $|\lambda|^{n} g^{-n}$ of $g^{-n}$, we can show that there is a subsequence of $\left(|\lambda|^{n} g^{-n}\right)_{n\in \mathbb{N}} $, which converges to $D_2 = \mathrm{diag}(1,0,0)$ as ${n\rightarrow \infty}$. Note that $  Ker(D_2) = \mathbb{L}{ \{ e_2,e_3 \}}= \{ [x:y:z] \in  \mathbb{P}^2_\mathbb{H} \mid x =0    \}$. Thus for every point $ p \in \mathbb{P}^2_\mathbb{H}\setminus  \mathbb{L} { \{e_2,e_3\}}$,  $\left(|\lambda|^{n} g^{-n}(p)\right)_{n\in \mathbb{N}} \xrightarrow{n\rightarrow \infty}  e_{1}  \in \mathbb{P}^2_\mathbb{H}$ and hence $e_1 \in L_1(G)$. Now consider the sequences $(|\mu|^{-n}g^{n})_{n\in \N}$  and  $(|\mu|^{n}g^{-n})_{n\in \N}$.  Then  there are subsequences of $|\mu|^{-n}g^{n} (p) $ and $|\mu|^{n}g^{-n}(q)$ that converge to $e_2 \in \mathbb{P}^2_\mathbb{H}$ whenever $p \in \mathbb{L}{ \{ e_1,e_2 \}}$ and $ q \in \mathbb{L}{ \{ e_2,e_3 \}}$.  Therefore, $e_2 \in  L_1(G)$. Also, from the above discussion, it follows that for every point $ p \in \mathbb{P}^2_\mathbb{H}\setminus L_0(G)$,   the set of cluster points of the orbits $\{g^n(p)\}_{n\in \Z}$ lies in the set $ \{e_1,e_2,e_3\}$. Consequently, $L_1(G)=\{e_1,e_2,e_3\}$.
		
		To find $ L_2(G)$, consider  a closed set  $C=\mathbb{L}{\{e_1,e_2\}} \cup  \mathbb{L}{\{e_2,e_3\}}$  in $\mathbb{P}^2_\mathbb{H}$. Now note that $D_1$ and  $D_2$ induce  pseudo-projective transformations in $\mathrm{QP}(3,\H)$ such that $\mathbb{L}{ \{ e_1,e_2 \}}= Ker(D_1)$ and $\mathbb{L}{ \{ e_2,e_3\}}= Ker(D_2) $. Therefore, using Lemma \ref{lem:app1}  we can show that the set of cluster points of orbits of compact subsets of  $\mathbb{P}^2_\mathbb{H}\setminus C$ lies in  $L_0(G)\cup L_1(G)=\{e_1,e_2,e_3\}$.  Then, using Lemma \ref{lemma:L2} we have  $ L_2(G)\subset C$, that is, $ L_2(G) \subset
		\mathbb{L} {\{e_1,e_2\}}\cup\mathbb{L}{\{e_2,e_3\}}$. Further,	to show  that $\mathbb{L}{\{e_2,e_3\}} \subset  L_2(G)$,  choose a compact set $K=\{[x:y:z] \in \mathbb{P}^2_\mathbb{H} \mid  |x|^2 =|y|^2 +|z|^2 \}$ which is a subset of $ \mathbb{P}^2_\mathbb{H}\setminus L_0(G)\cup L_1(G)$.  For a point $[0 : y:z]\in \mathbb{L}\{e_2,e_3\}$,  take a sequence $(k_n)_{n\in \N}\subset K$ such that $k_n=\left[(|\xi^n y|^2+|\mu^n z|^{2})^{1/2} :|\xi|^n y : |\mu|^n z\right] $. Then for each $n\in \N$, by considering  lift  $|\mu \xi|^{-n}g^n$ of $g^n$, we have 
		$$g^n(k_n) =  |\mu \xi|^{-n}g^n(k_n) = \left[ \frac{|\lambda|^n e^{in\arg(\lambda)} }{|\mu \xi|^{n}} (|\xi^n y|^2+|\mu^n z|^{2})^{1/2}: e^{in\arg(\mu)} y: e^{in\arg(\xi)}  z \right]$$
		$$= \left[ \frac{|\lambda|^n e^{in\arg(\lambda)}}{|\mu|^{n}} (|y|^2+ \frac{|\mu|^{2n}}{|\xi|^{2n}}|z|^{2})^{1/2}: e^{in\arg(\mu)}y: e^{in\arg(\xi)} z \right].$$
		Since	$|\lambda|<|\mu|<|\xi|$, we have a subsequence of  $g^n(k_n)$ which converges to $ [0:y:z]$ as $n\rightarrow \infty$. Hence, $\mathbb{L}{ \{e_2,e_3\}}\subset L_2(G)$. Similarly, to show that $\mathbb{L} {\{e_1,e_2\}}\subset L_2(G)$, we take a sequence $(k'_n)_{n\in \N}$ in the compact set  $K' = \{[x:y:z] \in \mathbb{P}^2_\mathbb{H} \mid |x|^2 +|y|^2  = |z|^2 \} \subset \mathbb{P}^2_\mathbb{H}\setminus L_0(G) \cup L_1(G)$
		such that $k'_n=\left[|\mu|^{-n} x : |\lambda|^{-n} y : (|\mu^{-n} x|^2+|\lambda^{-n} y|^{2})^{1/2} \right] $. Then by considering  lift  $|\lambda\mu |^{n}g^{-n}$ of $g^{-n}$, we can show  that there is a subsequence of  $g^{-n}(k'_n) $ which converges to a point  $[x:y:0] \in \mathbb{L} {\{e_1,e_2\}}$ as $n\rightarrow \infty$.
		Hence,   $\mathbb{L}{ \{e_1,e_2\}}\cup\mathbb{L}{ \{e_2,e_3\}}\subset L_2(G)$.
		Consequently,   $L_2(G)=\mathbb{L}{ \{e_1,e_2\}}\cup\mathbb{L}{\{e_2,e_3\} }$.
		
		\item \medskip \textbf{Screw loxodromic}: 
		In this case, we have $|\lambda|=|\mu|\neq 1, \lambda \neq \mu$ and  $ |\xi|=1/{|\lambda|^2}$. Further,  we will assume that $1 < |\lambda| $ since proof for the case $ |\lambda| < 1$ follows using a similar line of argument.
		If $g^n\left[\begin{array}{c}
			x\\y\\z
		\end{array}\right]=\left[\begin{array}{c}
			\lambda^n x\\\mu^n y\\\xi^n z
		\end{array}\right]  = \left[\begin{array}{c}
			x\\y\\z
		\end{array}\right]$ then the only  points with infinite isotropy groups are $\{e_1,e_2,e_3\}$.  Hence $L_0(G)=\{e_1,e_2,e_3\}$.
		
		Consider a  sequence $\left({|\lambda|^{-n}}g^n\right)_{n\in \mathbb{N}}$ such that
		$$|\lambda|^{-n}g^n=\left(\begin{array}{ccc}
			e^{in\arg(\lambda)} & 0 & 0\\0 & e^{in\arg(\mu)} & 0\\0 & 0 & \frac{1}{|\lambda|^{3n}}e^{ in\arg(\xi)}
		\end{array}\right).$$
		Then we get a subsequence of $\left(|\lambda|^{-n} g^{n}\right)_{n\in \mathbb{N}}$ which converges to  $D_3 = \mathrm{diag}(1,1,0)$. Note that $  Ker(D_3)= \{ e_3\}=  \{ [x:y:z] \in  \mathbb{P}^2_\mathbb{H} \mid x =y=0   \} $.  Thus for every point $ p \in \mathbb{P}^2_\mathbb{H}\setminus { \{ e_3\}}$,  $\left(|\lambda|^{-n} g^{n}(p)\right)_{n\in \mathbb{N}}$  converges to a point of $ \mathbb{L}{ \{ e_1,e_2 \}}  \in \mathbb{P}^2_\mathbb{H}$ as $n \rightarrow \infty$.  Therefore, for each point $q = [x:y:0] \in \mathbb{L}{ \{ e_1,e_2 \}}$, we can choose a point   $p = [x:y:1] \in   \mathbb{P}^2_\mathbb{H}\setminus L_0(G)$ such that $g^n(p) \xrightarrow{n\rightarrow \infty} q $ and hence $q \in  L_1(G)$. Consequently, $ \mathbb{L}{ \{ e_1,e_2 \}} \subset  L_1(G)$.  Now consider the sequence $\left(|\xi|^{n}g^{-n}\right)_{n\in \mathbb{N}}$ such that  $$|\xi|^{n}g^{-n} = \mathrm{diag}\left(\frac{1}{|\lambda|^{3n}}e^{-in\arg(\lambda)}, \frac{1}{|\lambda|^{3n}}e^{-i n\arg(\mu)}, e^{-in\arg(\xi)}\right).$$
		Since $1 <|\lambda| $, we get a subsequence	of   $\left(|\xi|^{n}g^{-n}\right)_{n\in \mathbb{N}}$ which converges to $D_1 = \mathrm{diag}(0,0,1)$, where $  Ker(D_1)= \mathbb{L}{ \{ e_1,e_2 \}}$. Thus for every point $ p \in \mathbb{P}^2_\mathbb{H}\setminus \mathbb{L}{ \{ e_1,e_2 \}}$,  $\left(|\xi|^{n}g^{-n}(p)\right)_{n\in \mathbb{N}}$  converges to $ e_3  \in \mathbb{P}^2_\mathbb{H}$ as $n \rightarrow \infty$. Therefore, $ e_3 \in  L_1(G)$. Also, from the above discussion, it follows that for every point $ p \in \mathbb{P}^2_\mathbb{H}\setminus L_0(G)$,   the set of cluster points of the orbits $\{g^n(p)\}_{n\in \Z}$ lies in the set $ \mathbb{L}{ \{e_1,e_2\}} \cup \{e_3\}$. Consequently, $L_1(G)=\mathbb{L}{ \{e_1,e_2\}} \cup \{e_3\}$.

		To find $L_2(G)$, consider a closed set $C=\mathbb{L}{\{e_1,e_2\}}\cup\{e_3\} \subset \mathbb{P}^2_\mathbb{H} $. Now note that $D_1$ and  $D_3$ induce pseudo-projective transformations in $\mathrm{QP}(3,\H)$ such that $\mathbb{L}{ \{ e_1,e_2 \}}= Ker(D_1)$ and $\mathbb{L}{ \{e_3\}}= Ker(D_3) $. Therefore, using Lemma \ref{lem:app1}  we can show that the set of cluster points of orbits of compact subsets of  $\mathbb{P}^2_\mathbb{H}\setminus C$ lies in  $L_0(G)\cup L_1(G)= \mathbb{L}{\{e_1,e_2\}}\cup\{e_3\}$. Then, using Lemma \ref{lemma:L2} we have  that  $ L_2(G) \subset 
		\mathbb{L} {\{e_1,e_2\}}\cup \{e_3\}$. Further,	we have already observed in the calculation of $L_1(G)$ that  points of $\mathbb{P}^2_\mathbb{H}  \setminus \mathbb{L} {\{e_1,e_2\}} \cup \{e_3\}$  converges to a point of  $\mathbb{L} {\{e_1,e_2\}}$ for positive iterates of $g$,  and converges to  $\{e_3\}$ for negative iterates of $g$. Therefore, we can easily show that $\mathbb{L}{ \{e_1,e_2\}} \cup \{e_3\}\subset L_2(G)$.  Hence, $ L_2(G)= 
		\mathbb{L}{ \{e_1,e_2\}} \cup \{e_3\}$.

		\item \medskip  \textbf{Homothety}:
			In this case, we have $ |\lambda|\neq 1$ and $ |\xi|=1/{|\lambda|^2}$.  Let $\lambda = re^{i  \arg(\lambda)},$ where $r  \neq 1$. 
	Since each point $[x:y:z] \in L_0(G)$ has an infinite isotropy group and  satisfy the  equation $g^n ([x:y:z]) =[	\lambda^n x: 	\lambda^n y:  \xi^nz] =[x:y:z]$ 
	for infinitely many values of $n \in \Z$. It is easy to note that $e_3\in L_0(G)$. 
	Recall that $wj = j \bar{w}$ for each $ w\in \C$. Thus if $\lambda =r$, then $g^n[x:y:0] =  [x r^n :y r^n :0]  = [x:y:0] $ for each $x,y \in \H$. If $\lambda \neq r$ then,  $g^n[x:y:0] =  [x \lambda^n :y \lambda ^n :0]  = [x:y:0] $ for each $x,y \in \C$. 
	Therefore, 	
$$L_0(G)=  \{e_3\} \cup 
	\begin{cases}
		\mathbb{L}\{e_1,e_2\}, & \text{if $\lambda \in \R$}\\
		\mathbb{L}_{\C}\{e_1,e_2\}, & \text{  if $ \lambda \in \C \setminus \R $}
	\end{cases}
	$$ where  $	\mathbb{L}_{\C}\{e_1,e_2\}$ denotes the complex projective line joining $e_1$ and $ e_2$.
	The proof of  $L_1(G)=\mathbb{L}{ \{e_1,e_2\}}\cup \{e_3\} =L_2(G)$ follows from a similar  line of argument as in the case of \textit{screw loxodromic}, and we omit it.
	
		\item  \medskip \textbf{Loxo-parabolic:} In the loxo-parabolic case $g=\left(\begin{array}{ccc}
			\lambda & 1 & 0\\0 & \lambda & 0\\0 & 0 & \xi
		\end{array}\right), \; |\xi|=|\lambda|^{-2}\neq 1$. Further, we will assume that $1 < |\lambda| $ since proof for the case $ |\lambda| < 1$ follows using a similar line of argument.   Note the following equation
		$$g^n\left[\begin{array}{c}
			x\\y\\z
		\end{array}\right] =   \left(\begin{array}{ccc}
			\lambda^n & n\lambda^{n-1} & 0\\0 & \lambda^n & 0\\0 & 0 & \xi^n
		\end{array}\right)\left[\begin{array}{c}
			x\\y\\z
		\end{array}\right]= \left[\begin{array}{c}
			\lambda^nx+n\lambda^{n-1}y \\ \lambda^ny\\ \xi^n z
		\end{array}\right]=\left[\begin{array}{c}
			x\\y\\z
		\end{array}\right]. $$
		Clearly, the above equation is satisfied by infinitely
		many values of $n \in \Z$ only when  $[x:y:z] = e_1$ or $e_3$. Hence, $L_0(G)=\{e_1,e_3\}$. 
		Now consider lifts $\frac{1}{n|\lambda|^{n-1}}g^n$ of $g^n$ and $|\xi|^n g^{-n}$ of $ g^{-n}$ for each $n \in \N$ such that
		$$\frac{1}{n|\lambda|^{n-1}}g^n=\left(\begin{array}{ccc}
			\frac{1}{n}|\lambda|e^{ in\arg(\lambda)} & e^{  i(n-1)\arg(\lambda)} & 0\\
			0 & \frac{1}{n}|\lambda|e^{in\arg(\lambda)} & 0\\
			0 & 0 & \frac{1}{n|\lambda|^{3n-1}}e^{ in\arg(\xi)}
		\end{array}\right)  \hbox{ and } $$
		$$|\xi|^n g^{-n}=\left(\begin{array}{ccc}
			\frac{1}{|\lambda|^{3n}}e^{- in\arg(\lambda)} & \frac{-n}{|\lambda|^{3n+1}}e^{- i(n+1)\arg(\lambda)} & 0\\
			0 & \frac{1}{|\lambda|^{3n}}e^{- in\arg(\lambda)} & 0\\
			0 & 0 & e^{- in\arg(\xi)}
		\end{array}\right).$$ 
		Then there is a subsequence of  $(\frac{1}{n|\lambda|^{n-1}}g^n)_{n\in \N}$ which converges to  $D_4 =\left(\begin{array}{ccc}
			0 & 1 & 0\\0 & 0 & 0\\0 & 0 & 0
		\end{array}\right)$, where $ Ker(D_4) = \mathbb{L} {\{e_1,e_3\} } $. Similarly, there exists a subsequence of  $(|\xi|^n g^{-n})_{n\in \N}$ which converges to $D_1= \mathrm{diag}(0,0,1)  $ where $ Ker(D_1)=\mathbb{L}{ \{e_1,e_2\}} $. Thus for every point $ p \in \mathbb{P}^2_\mathbb{H}\setminus \mathbb{L}{ \{ e_1,e_3 \}}$ and $ q \in \mathbb{P}^2_\mathbb{H}\setminus \mathbb{L}{ \{ e_1,e_2 \}}$, we have  $(\frac{1}{n|\lambda|^{n-1}}g^n(p))_{n\in \N} \xrightarrow{n\rightarrow \infty} e_1 \in \mathbb{P}^2_\mathbb{H}$ and   $(|\xi|^n g^{-n}(q))_{n\in \N} \xrightarrow{n\rightarrow \infty} e_3 \in \mathbb{P}^2_\mathbb{H}$. Therefore, $\{e_1,e_3\} \subset L_1(G)$.  Also, from the above discussion, it follows that for every point $ p \in \mathbb{P}^2_\mathbb{H}\setminus L_0(G)$,   the set of cluster points of the orbits $\{g^n(p)\}_{n\in \Z}$ lies in the set $ \{e_1,e_3\}$ and hence  $L_1(G) = \{e_1,e_3\}$.  
		
		To find $ L_2(G)$, consider  a closed set  $C=\mathbb{L}{\{e_1,e_2\}} \cup  \mathbb{L}{\{e_2,e_3\}}$  in $\mathbb{P}^2_\mathbb{H}$. Now note that $D_1$ and  $D_4$ induce  pseudo-projective transformations in $\mathrm{QP}(3,\H)$ such that $\mathbb{L}{ \{ e_1,e_2 \}}= Ker(D_1)$ and $\mathbb{L}{ \{ e_1,e_3\}}= Ker(D_2) $. Therefore, using Lemma \ref{lem:app1}  we can show that the set of cluster points of orbits of compact subsets of  $\mathbb{P}^2_\mathbb{H}\setminus C$ lies in  $L_0(G)\cup L_1(G)=\{e_1,e_3\}$.  Then, using Lemma \ref{lemma:L2} we have  $ L_2(G)\subset C$, that is, $ L_2(G) \subset
		\mathbb{L} {\{e_1,e_2\}}\cup\mathbb{L}{\{e_1,e_3\}}$. Further,	to show  that $\mathbb{L}{\{e_1,e_3\}} \subset  L_2(G)$,   consider a sequence $(k_n)_{n \in \N}$ such that $k_n = \Big[1:1:\frac{-n}{|\lambda|^{3n+1}w}\Big],$ where $w$ is a non-zero quaternion. Note that as  $1 < |\lambda | $,  $k_n$ is a sequence in the compact subset $K = \{k_n \mid n\in \mathbb{N}\} \cup \{[1:1:0]\}$ of $\mathbb{P}^2_\mathbb{H}\setminus \{e_1,e_3\}$, where $ \{e_1, e_3\} = L_0(G) \cup L_1(G)$.  Then we have 
		$$ g^{-n}(k_n) =  g^{-n} \left(\left[1:1:\frac{-n}{|\lambda|^{3n+1}w}\right]\right) =  \left[\lambda^{-n}-n \lambda^{-n-1}: \lambda^{-n} :\frac{-n \xi^{-2n}}{|\lambda|^{3n+1}w}\right].$$
		This implies, $ \frac{|\lambda|^{n+1}}{n}g^{-n}(k_n) = \left[\frac{|\lambda| e^{ -in\arg(\lambda)}}{n} - e^{ -i(n+1)\arg(\lambda)}: \frac{|\lambda| e^{ -in\arg(\lambda)}}{n} :\frac{-e^{ -i2n\arg(\xi)}}{w}\right].$
		Therefore,  there exists a subsequence of $(g^{-n}(k_n))_{n \in \N}$ which converges to a point $[w:0:1] \in \mathbb{L}{\{e_1,e_3\}}$. Thus $\mathbb{L}{\{e_1,e_3\}} \subset L_2(G)$.

		Now  consider a sequence $(k_n)_{n \in \N}$ such that $k_n = \left[\frac{e^{ -i\arg(\lambda)}}{|\lambda|} +\frac{y}{n} :\frac{1}{n}:\frac{1}{|\lambda|}\right],$ where $y$ is a non-zero quaternion. Note that  $k_n$ is a sequence in the compact subset $K = \{k_n \mid n\in \mathbb{N}\} \cup \{[e^{ -i\arg(\lambda)}:0:1]\}$ of $\mathbb{P}^2_\mathbb{H}\setminus L_0(G) \cup L_1(G)$. Then we have 
		\begin{equation*}
		g^{n}(k_n) =  \frac{1}{|\lambda|^{n}}g^{n} \left(\left[\frac{e^{ -i\arg(\lambda)}}{|\lambda|} +\frac{y}{n} :\frac{1}{n}:\frac{1}{|\lambda|}\right]\right) = \left[\frac{e^{ in\arg(\lambda)}}{n}y :\frac{e^{ in\arg(\lambda)}}{n}:\frac{1}{|\lambda|^{3n+1}}e^{ in\arg(\xi)}\right].
		\end{equation*}
		This implies, $ g^{n}(k_n) = \frac{n}{|\lambda|^{n}}g^{n}(k_n) = \Big[e^{ in\arg(\lambda)}y :e^{ in\arg(\lambda)}:\frac{n}{|\lambda|^{3n+1}}e^{ in\arg(\xi)}\Big].$
		Therefore,  there exists a subsequence of $(g^{n}(k_n))_{n \in \N}$ which converges to a point $[y:1:0] \in \mathbb{L}{\{e_1,e_2\}}$. Thus $\mathbb{L}{\{e_1,e_2\}} \subset L_2(G)$. Hence,  $L_2(G) =\mathbb{L}{ \{e_1,e_2\}}\cup\mathbb{L} {\{e_1,e_3\} }$.
\end{enumerate} 
This completes the proof.
\end{proof}

\section{Kulkarni Limit Sets for Parabolic Transformations}\label{sec:kul-par}
In this section, we work out the Kulkarni limit sets for the cyclic groups generated by the parabolic elements of $\mathrm{PSL}(3,\mathbb{H})$. We have several subcases of parabolic elements, such as unipotent, ellipto-parabolic and ellipto-translation. The following theorem proves Table \ref{table:3}.
\begin{theorem}\label{th:parabolic}
	Let $\tilde{g}\in \mathrm{PSL}(3,\mathbb{H})$ be a parabolic transformation and $g\in\mathrm{SL}(3,\mathbb{H})$ be a lift of $\tilde{g}$. Then the Kulkarni sets for $G=\langle \tilde{g}\rangle$ are given by the following:
	\begin{enumerate}[(i)]
		\item \medskip Vertical translation: If $g=\begin{pmatrix}
			1 & 1 & 0\\0 & 1 & 0\\0 & 0 & 1
		\end{pmatrix},$ then $L_0(G)=\mathbb{L}\{e_1,e_3\},\; L_1(G)=\{e_1\}$ and $L_2(G)=\{e_1\}$.
		\item \medskip Non-vertical translation: If $g=\begin{pmatrix}
			1 & 1 & 0\\0 & 1 & 1\\0 & 0 & 1
		\end{pmatrix},$ then $L_0(G)=\{e_1\},\; L_1(G)=\{e_1\}$ and $L_2(G)=\mathbb{L}\{e_1,e_2\}$.
		\item \medskip Ellipto-parabolic: If $g=\begin{pmatrix}
			e^{2\pi i\alpha} & 1 & 0\\0 & e^{2\pi i\alpha} & 0\\0 & 0 & e^{2\pi i\beta}
		\end{pmatrix}, \; 	e^{2\pi i\alpha} \neq 1,$ then $ L_2(G)=\{e_1\}$ and
		\begin{enumerate}
			\item  	\medskip Rational ellipto-parabolic: if $\alpha, \beta \in \mathbb{Q}$ then $L_0(G)=\mathbb{L}\{e_1,e_3\}$, $L_1(G)=\{e_1\}$.
			\item  	\medskip Irrational ellipto-parabolic:  if either or both of the $\alpha$ and $\beta$ are in  $\mathbb{R}\setminus \mathbb{Q}$,   then $L_0(G)=\{e_1,e_3\}$ and $L_1(G)=\mathbb{L}\{e_1,e_3\}$.
		\end{enumerate}
		\item  \medskip Ellipto-translation: If $g=\begin{pmatrix}
			e^{2\pi i\alpha} & 1 & 0\\0 & e^{2\pi i\alpha} & 1\\0 & 0 & e^{2\pi i\alpha}
		\end{pmatrix},$ then $L_0(G)=L_1(G)=\{e_1\}$ and $L_2(G)=\mathbb{L}\{e_1,e_2\}$.
	\end{enumerate}
\end{theorem}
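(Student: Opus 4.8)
The four families are handled by the same three-step scheme already used for elliptic and loxodromic elements; throughout, $\lambda=e^{2\pi i\alpha}$ and $\xi=e^{2\pi i\beta}$ are unit complex numbers and, for each $n\in\mathbb{Z}$, $g^{n}$ is a fixed upper-triangular matrix whose entries are $\mathbb{R}[n]$-polynomials times powers of $\lambda$ and $\xi$ (for instance the super-diagonal entry of the vertical translation is $n$, and the $(1,3)$-entry of the ellipto-translation is $\binom{n}{2}\lambda^{n-2}$). To compute $L_{0}(G)$ note that $[x:y:z]$ has infinite isotropy iff $g^{n}(x,y,z)^{t}=(x\mu_{n},y\mu_{n},z\mu_{n})^{t}$ for infinitely many $n$ and some $\mu_{n}\in\mathbb{H}$. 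Reading these equations from the lowest non-trivial row upward pins $\mu_{n}$ down as a conjugate of $\lambda^{n}$ or $\xi^{n}$ (hence of modulus $1$), and the strictly super-diagonal entries of $g^{n}$, which grow like $n$ or $n^{2}$, then force the "lower'' coordinates to vanish --- e.g.\ $ny=0$ for the vertical translation. This yields $L_{0}(G)=\mathbb{L}\{e_{1},e_{3}\}$ for the vertical translation, $L_{0}(G)=\{e_{1}\}$ for the non-vertical translation and for the ellipto-translation, and, in the ellipto-parabolic family, the dichotomy of the statement: when $\lambda^{N}=\xi^{N}=1$ the whole line $\mathbb{L}\{e_{1},e_{3}\}$ is $g^{N}$-fixed, while in the irrational case only $\{e_{1},e_{3}\}$ survives (here one uses \lemref{prop:conv1} and \lemref{prop:conv2}).

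\textbf{The set $L_{1}(G)$.} Divide $g^{n}$ by the modulus of its dominant entry and pass to a subsequence along which all occurring powers $\lambda^{n},\xi^{n}$ converge in $S^{1}$ (possible by compactness). The limit is a rank-one pseudo-projective map $D\in\mathrm{QP}(3,\mathbb{H})$ whose kernel is one of the coordinate lines $\mathbb{L}\{e_{1},e_{2}\}$, $\mathbb{L}\{e_{1},e_{3}\}$ and whose image is the single point $e_{1}$; the negative iterates behave symmetrically. By \lemref{lem:app1} the orbit of any point off that kernel line converges to $e_{1}$, which gives $L_{1}(G)=\{e_{1}\}$ for the two translations and the ellipto-translation. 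For the ellipto-parabolic family the points of $\mathbb{L}\{e_{1},e_{3}\}$ must be looked at directly: for $x,z\neq0$ one computes $g^{n}[x:0:z]=[1:0:(\xi\lambda^{-1})^{n}u_{1}+(\xi\lambda)^{n}u_{2}j]$, where $u_{1}+u_{2}j=zx^{-1}$ (using $j\lambda^{-n}=\lambda^{n}j$). In the rational case this orbit is finite, while in the irrational case \lemref{prop:conv2} together with \remref{rem-torus-denseness} returns both characters $(\xi\lambda^{-1})^{n},(\xi\lambda)^{n}$ near $1$ along a common subsequence, so every such point is a cluster point of its own orbit; taking closures gives $L_{1}(G)=\mathbb{L}\{e_{1},e_{3}\}$.

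\textbf{The set $L_{2}(G)$.} For the upper bound I would use \lemref{lemma:L2}: for the non-vertical translation and the ellipto-translation take $C=\mathbb{L}\{e_{1},e_{2}\}$, and for the vertical and ellipto-parabolic cases take $C=\{e_{1}\}$. A compact $K$ disjoint from $C$ stays a positive distance from the kernels of all the subsequential limits of the rescaled $g^{\pm n}$ found above, so by \lemref{lem:app1}, and because $g$ preserves the relevant coordinate line, every cluster point of $\{g^{n}(K)\}_{n\in\mathbb{Z}}$ lies in $L_{0}(G)\cup L_{1}(G)$; hence $L_{2}(G)\subseteq C$, which already settles the vertical and ellipto-parabolic cases ($L_{2}(G)=\{e_{1}\}$). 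For the reverse inclusion in the remaining two cases, given a target $p\in\mathbb{L}\{e_{1},e_{2}\}$ I would build a sequence $k_{n}\to k$ inside a compact subset of $\mathbb{P}^{2}_{\mathbb{H}}\setminus(L_{0}(G)\cup L_{1}(G))$ with $g^{n}(k_{n})\to p$, letting $k_{n}$ approach the kernel line at a rate tuned to the polynomial growth of $g^{n}$ and choosing its coordinates so that two coordinates of $g^{n}(k_{n})$ stay comparable while the third dies; in the ellipto-translation case the naive choice makes the leading coordinate blow up, so one first uses the third coordinate of $k_{n}$ to cancel the $n\lambda^{n-1}$-term in the first coordinate of $g^{n}(k_{n})$, after which $p$ is hit exactly. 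Taking closures of the attained targets gives $L_{2}(G)=\mathbb{L}\{e_{1},e_{2}\}$.

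\textbf{Main obstacle.} The genuinely new difficulty compared with the complex case is the interaction of the Jordan block with non-commutativity in the ellipto-parabolic family: since $j\lambda^{-n}=\lambda^{n}j$, the orbit of a point of $\mathbb{L}\{e_{1},e_{3}\}$ off the two coordinate axes spreads along the two-parameter family of phases $(\xi\lambda^{-1})^{n}$ and $(\xi\lambda)^{n}$, so computing $L_{1}(G)$ really needs torus density rather than circle density and forces a separate treatment of the degenerate sub-case $\alpha\pm\beta\in\mathbb{Q}$. The second point requiring care is that the "moving'' sequences $k_{n}$ in the lower bound for $L_{2}(G)$ must be normalised so as to remain in a fixed compact set disjoint from $L_{0}(G)\cup L_{1}(G)$ --- a careless normalisation sends $k_{n}$ back to $e_{1}$ and the argument collapses. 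Everything else reduces to elementary linear algebra over $\mathbb{H}$ together with \lemref{prop:conv1}, \lemref{prop:conv2}, \lemref{lem:app1} and \lemref{lemma:L2}.
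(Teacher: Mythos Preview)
Your three-step scheme matches the paper's approach closely: compute $L_0$ from the isotropy equations, pass to rank-one pseudo-projective limits for $L_1$, and combine \lemref{lemma:L2} with explicit moving sequences for $L_2$. The explicit construction you outline for the $L_2$ lower bound in the ellipto-translation (cancelling the $n\lambda^{n-1}$ contribution via the third coordinate) is exactly what the paper does with $k_n=[x:(-1+\tfrac1n):\tfrac{2}{n}e^{2\pi i\alpha}]$.

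One slip: for the vertical translation and the ellipto-parabolic family you take $C=\{e_1\}$ in \lemref{lemma:L2} and then assert that any compact $K$ disjoint from $C$ ``stays a positive distance from the kernels''. That is false---the kernel of the limiting pseudo-projective map $D_4$ is the whole line $\mathbb{L}\{e_1,e_3\}$, which $K$ may well meet. Your add-on ``because $g$ preserves the relevant coordinate line'' is the correct repair, but it requires a separate argument for sequences in $K$ accumulating on $\mathbb{L}\{e_1,e_3\}$. The paper sidesteps this entirely: in both of these cases one has $L_0(G)\cup L_1(G)=\mathbb{L}\{e_1,e_3\}=\mathrm{Ker}(D_4)$, so every compact $K$ appearing in the \emph{definition} of $L_2$ is already disjoint from the kernel, \lemref{lem:app1} applies directly, and $L_2(G)=\{e_1\}$ without ever invoking \lemref{lemma:L2}.

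One place where you go further than the paper: for $L_1$ in the irrational ellipto-parabolic case you unpack the orbit on $\mathbb{L}\{e_1,e_3\}$ as $[1:0:(\xi\lambda^{-1})^n u_1+(\xi\lambda)^n u_2 j]$ and appeal to torus density, isolating the sub-case $\alpha\pm\beta\in\mathbb{Q}$. The paper is terser here---it simply observes that $g$ restricts to an (irrational) elliptic transformation of $\mathbb{L}\{e_1,e_3\}$ and invokes \thmref{th:elliptic}. Both routes are valid; yours makes the role of non-commutativity explicit, while the paper's is shorter.
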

\begin{proof}
	If all the eigenvalues of the transformation are $1$, then it is called a \textbf{unipotent}. Unipotent elements of $\mathrm{SL}(3,\mathbb{H})$ are divided into two cases depending upon their minimal polynomials. A unipotent element is called \textit{vertical}, resp.  \textit{non-vertical translations} if the corresponding minimal polynomial is $(x-1)^2$, resp. $(x-1)^3$.
	\begin{enumerate}[(i)]
		\item  	 \textbf{Vertical translation:} In this case,  consider the following equation $$g^n\begin{bmatrix}
			x\\y\\z
		\end{bmatrix}=\left(\begin{array}{ccc}
			1 & n & 0\\0 & 1 & 0\\ 0 & 0 & 1
		\end{array}\right)\begin{bmatrix}
			x\\y\\z
		\end{bmatrix}=\begin{bmatrix}
			x+ny\\y\\z
		\end{bmatrix}=\begin{bmatrix}
			x\\y\\z
		\end{bmatrix},$$ which gives us $y=0$.  This implies  $L_0(G)=\mathbb{L}{\{e_1,e_3\}}=\{[x: y: z]\in \mathbb{P}^2_\mathbb{H} \mid y=0\}$.   For the calculation of $L_1(G)$, consider the sequences $\left(\frac{1}{n}g^n\right)_{n\in \mathbb{N}}$ and  $\left(\frac{-1}{n}g^{-n}\right)_{n\in \mathbb{N}}$. 
		Then for every point $p=[x: y: z]\in \mathbb{P}^2_\mathbb{H}\setminus L_0(G)$ we have $$g^n\begin{bmatrix}
			x\\y\\z
		\end{bmatrix}= \frac{1}{n}g^n\begin{bmatrix}
			x\\y\\z
		\end{bmatrix}=\begin{bmatrix}
			\frac{1}{n}x+y\\\frac{1}{n}y\\\frac{1}{n}z
		\end{bmatrix} \xrightarrow{n \to \infty}  \begin{bmatrix}
			y\\0\\0
		\end{bmatrix} =  \begin{bmatrix}
			1\\0\\0
		\end{bmatrix}  .$$ 
		Similarly, we can show $ \left(\frac{-1}{n}g^{-n}(p)\right)_{n\in \mathbb{N}} \xrightarrow{n \to \infty} e_1$ for every point $p \in \mathbb{P}^2_\mathbb{H}\setminus L_0(G)$. Hence,  $L_1(G)=e_1$.
		Note that for each $n \in \N$, $\frac{1}{n}g^n$ and $\frac{-1}{n}g^{-n}$  are lifts of  $g^{n}$ and $g^{-n}$, respectively. Then in view of Lemma \ref{lem:app1},  we have that both the sequences $g^{n}$ and $g^{-n}$ converges to a pseudo-projective transformation $D_4 = \begin{pmatrix}
			0 & 1 & 0\\0 & 0 & 0 \\0 & 0 & 0
		\end{pmatrix} \in \mathrm{QP}(3,\H)$   uniformly on the compact subsets of $\mathbb{P}^2_\mathbb{H}\setminus \mathbb{L} { \{e_1,e_3\}}$, where $ \mathbb{L}{ \{ e_1,e_3 \}}= Ker(D_4) = \{ [x:y:z] \in  \mathbb{P}^2_\mathbb{H} \mid y =0    \}$. 	Since $\mathbb{L}{\{e_1,e_3\}}=L_0(G)\cup L_1(G)$, it follows that $e_1=[1: 0: 0]\in \mathbb{P}^2_\mathbb{H}$  is the only cluster point of the orbits   $\{g^n(K)\}_{n\in \mathbb{Z}}$    for every compact subsets $K$ of  $ \mathbb{P}^2_\mathbb{H}\setminus L_0(G)\cup L_1(G)$. As a consequence $L_2(G)=\{e_1\}$.
		
		\item \medskip  \textbf{Non-vertical translation:} In this case, we have
		$g=\left(\begin{array}{ccc}
			1 & 1 & 0\\0 & 1 & 1\\ 0 & 0 & 1
		\end{array}\right)$. 
		Then for each $n \in \N$, we have
		$g^n\begin{bmatrix}
			x\\y\\z
		\end{bmatrix}=  \left(\begin{array}{ccc}
			1 & n & \frac{n(n-1)}{2}\\0 & 1 & n\\ 0 & 0 & 1
		\end{array}\right)    \begin{bmatrix}
			x\\y\\z
		\end{bmatrix} =   \begin{bmatrix}
			x+ny+\frac{n(n-1)}{2}z\\y+nz\\z
		\end{bmatrix}$ and 
		$ g^{-n}\begin{bmatrix}
			x\\y\\z
		\end{bmatrix} =     	\left(\begin{array}{ccc}
			1 & -n & \frac{n(n+1)}{2}\\0 & 1 & -n\\ 0 & 0 & 1
		\end{array}\right)   \begin{bmatrix}
			x\\y\\z
		\end{bmatrix}     =\begin{bmatrix}
			x-ny+\frac{n(n+1)}{2}z\\y-nz\\z
		\end{bmatrix} $. Therefore, if the equation $g^n(p)= p$ satisfied by a point $p = [x:y:z] \in \mathbb{P}^2_\mathbb{H}$  for  infinitely many values of $n \in \Z$ then $p=[1: 0: 0]=e_1$ and hence $L_0(G)=\{e_1\}$. Note that for each $n \in \N$, $\frac{2}{n(n-1)}g^n$ and $\frac{2}{n(n+1)}g^{-n}$  are lifts of  $g^{n}$ and $g^{-n}$, respectively. Thus it follows that both the sequences $g^{n}$ and $g^{-n}$ converge to $D_5= \begin{pmatrix}
			0 & 0& 1 \\0 & 0 & 0 \\0 & 0 & 0
		\end{pmatrix}$, where $  Ker(D_5)=\mathbb{L}{ \{ e_1,e_2 \}} = \{ [x:y:z] \in  \mathbb{P}^2_\mathbb{H} \mid z =0    \}$. Thus  for each point $p \in \mathbb{P}^2_\mathbb{H}\setminus \mathbb{L} { \{e_1,e_2\}}$, sequences $g^{n}(p)$ and $g^{-n}(p)$ converge to $e_1$ as $n\rightarrow \infty$. Since $L_0(G)=\{e_1\}\subset \mathbb{L}\{e_1,e_2\}$, it follows that  $ e_1$ is the only cluster point of orbits $\{g^n(p)\}_{n \in \Z}$ for every point $ p \in \mathbb{P}^2_\mathbb{H}\setminus L_0(G)$. Hence,  $L_1(G)=\{e_1\}$.
		
		To find $L_2(G)$, consider a closed set $C=\mathbb{L}{\{e_1,e_2\}} \subset \mathbb{P}^2_\mathbb{H}$.
		Now note that $D_5$  induces a  pseudo-projective transformation in $\mathrm{QP}(3,\H)$ such that $\mathbb{L}{ \{ e_1,e_2 \}}= Ker(D_5)$. Therefore, using Lemma \ref{lem:app1}  we can show that the set of cluster points of orbits of compact subsets of  $\mathbb{P}^2_\mathbb{H}\setminus C$ lies in  $L_0(G)\cup L_1(G)=\{e_1\}$. Then, using Lemma \ref{lemma:L2} we have  that  $ L_2(G) \subset 
		\mathbb{L} {\{e_1,e_2\}}$. Now to show that $\mathbb{L} {\{e_1,e_2\}} \subset L_2(G)$, consider a compact subset $K = \{[x:-\frac{n-1}{n}:\frac{2}{n}] \in \mathbb{P}^2_\mathbb{H} \mid n\in \mathbb{N}\} \cup \{[x:-1:0]\}$ of $\mathbb{P}^2_\mathbb{H}\setminus \{e_1\}$, where $ \{e_1\} = L_0(G) \cup L_1(G)$. Then the sequence $g^n([x:-\frac{n-1}{n}:\frac{2}{n}]) = [x:1 + \frac{1}{n}:\frac{2}{n}]$ converges to a point  $[x:1 :0] \in \mathbb{L}{\{e_1,e_2\}}$ as $n\rightarrow \infty$. This implies $\mathbb{L}{\{e_1,e_2\}} \subset L_2(G)$. Hence,  $ L_2(G)=\mathbb{L}{\{e_1,e_2\}} $.
		
		\item 	\medskip  { \textbf{Ellipto-parabolic:}} In this case, for each $n \in \N$, we have 
		$$g^n \begin{bmatrix}
			x\\y\\z
		\end{bmatrix} = \left(\begin{array}{ccc}
			e^{2\pi i n\alpha} & ne^{2\pi i (n-1)\alpha} & 0\\0 & e^{2\pi i n\alpha} & 0\\ 0 & 0 & e^{2\pi i n\beta}
		\end{array}\right) \begin{bmatrix}
			x\\y\\z
		\end{bmatrix}=  \begin{bmatrix}
			e^{2\pi i n\alpha}x+ne^{2\pi i (n-1)\alpha}y\\ e^{2\pi i n\alpha}y\\ e^{2\pi i n\beta}z
		\end{bmatrix},  \hbox{and}$$ 
		$$g^{-n} \begin{bmatrix}
			x\\y\\z
		\end{bmatrix} = \left(\begin{array}{ccc}
			e^{-2\pi i n\alpha} &- ne^{-2\pi i (n+1)\alpha} & 0\\0 & e^{-2\pi i n\alpha} & 0\\ 0 & 0 & e^{-2\pi i n\beta}
		\end{array}\right) \begin{bmatrix}
			x\\y\\z
		\end{bmatrix}=  \begin{bmatrix}
			e^{-2\pi i n\alpha}x-ne^{-2\pi i (n+1)\alpha}y\\ e^{-2\pi i n\alpha}y\\ e^{-2\pi i n\beta}z
		\end{bmatrix}.$$
		
		Note that there exist subsequences of $\{\frac{1}{n}g^n(p)\}_{n\in \N}$ and  $\{\frac{-1}{n}g^{-n}(p)\}_{n\in \N}$  which converge to  $D_4= \begin{pmatrix}
			0 & 1& 0 \\0 & 0 & 0 \\0 & 0 & 0
		\end{pmatrix}$, where $  Ker(D_4)=\mathbb{L}{ \{ e_1,e_3 \}} $. 
		Thus for each point $p =[x:y:z] \in  \mathbb{P}^2_\mathbb{H}$ with $y \neq 0$, sequences $\{\frac{1}{n}g^n(p)\}_{n\in \N}$ and  $\{\frac{-1}{n}g^{-n}(p)\}_{n\in \N}$ 
		converge  to $e_1 \in  \mathbb{P}^2_\mathbb{H} $ as $n\rightarrow \infty$. Now to find $L_0(G)$ and $L_1(G)$, we can consider the following subcases depending on the rationality of $\alpha$ and $\beta$.
			\begin{enumerate}[(a)]
			\item  \medskip \textit{Rational ellipto-parabolic}: Here $\alpha$ and $\beta$ both are rational numbers. So there exists a $n_0 \in \N$ such that $e^{2\pi i n_0 \alpha} = e^{2\pi i n_0 \beta} =1$. Thus  a point $p= [x:y:z] \in \mathbb{P}^2_\mathbb{H}$  satisfy $g^n(p) =p$ for infinitely many values of $n \in Z$ only if $y =0$. Hence,  $L_0(G)=\mathbb{L}{ \{e_1,e_3\}}$. Futher, if $p =[x:y:z] \in  \mathbb{P}^2_\mathbb{H} \setminus \mathbb{L}{ \{e_1,e_3\}}$, then $y \neq 0$. Therefore,  $ e_1$ is the only cluster point of orbits $\{g^n(p)\}_{n \in \Z}$ for every point $ p \in \mathbb{P}^2_\mathbb{H}\setminus L_0(G)$. Hence,  $L_1(G)=\{e_1\}$. 
		
			\item 	   \textit{Irrational Ellipto-Parabolic}: In this case, either $\alpha$ or $\beta$ or both of them are irrational numbers.  The points which give infinite isotropy groups are only $e_1$ and $e_3$. Therefore,  $L_0(G)=\{e_1,e_3\}$.  Now note that for each point $ p =[x:y:z] \in \mathbb{P}^2_\mathbb{H}\setminus L_0(G)$ with $y \neq 0$, $ e_1$ is the only cluster point of orbits $\{g^n(p)\}_{n \in \Z}$. Also,  $g$ acts as an elliptic transformation on $\mathbb{L}{\{e_1,e_3\}}=\{[x:y:z] \mid y=0\}$  and hence $\mathbb{L}{\{e_1,e_3\}} \subset L_1(G)$ (cf. \textit{elliptic case}). Therefore, $ L_1(G) = \mathbb{L}{\{e_1,e_3\}}$.  
		\end{enumerate}
	
Therefore,  we have $L_0(G) \cup L_1(G) = \mathbb{L}{ \{e_1,e_3\}}$.
Now consider a compact set $K \subset  \mathbb{P}^2_\mathbb{H}\setminus L_0(G) \cup L_1(G)$. Using a similar argument as in the case of {\it vertical translation}, we can show that $e_1$ is the unique cluster point of the family of compact sets $\{g^n(K)\}_{n \in \Z}	$. Hence,  $ L_2(G) =\{e_1\}$.

\item 	\medskip { \textbf{Ellipto-translation:}}
In this case,  $g^n=\left(\begin{array}{ccc}
e^{2\pi i n \alpha} & ne^{2\pi i (n-1)\alpha} & \frac{n(n-1)}{2}e^{2\pi i(n-2)\alpha}\\
0 & e^{2\pi i n \alpha} & ne^{2\pi i (n-1)\alpha}\\
0 & 0 & e^{2\pi i n \alpha}
\end{array}\right)$.  Thus for each $n \in \N$, we have $$g^n\left[\begin{array}{c}
x \\y \\z
\end{array}\right]=\left[\begin{array}{c}
e^{2\pi i  n \alpha}x+ne^{2\pi i (n-1)\alpha}y+\frac{n(n-1)}{2}e^{2\pi i(n-2)\alpha}z\\
e^{2\pi i n \alpha}y+ne^{2\pi i (n-1)\alpha}z\\
e^{2\pi i n \alpha}z
\end{array}\right], \hbox{ and}$$ 
$$g^{-n}\left[\begin{array}{c}
x \\y \\z
\end{array}\right]=\left[\begin{array}{c}
e^{-2\pi i  n \alpha}x -ne^{-2\pi i (n+1)\alpha}y+\frac{n(n+1)}{2}e^{-2\pi i(n+2)\alpha}z\\
e^{-2\pi i n \alpha}y-ne^{-2\pi i (n+1)\alpha}z\\
e^{-2\pi i n \alpha}z
\end{array}\right].$$
Therefore, if the equation $g^n(p)= p$ satisfied by a point $p = [x:y:z] \in \mathbb{P}^2_\mathbb{H}$  for  infinitely many values of $n \in \Z$ then $p=[1: 0: 0]=e_1$ and hence $L_0(G)=\{e_1\}$. 
Note that there exist subsequences of $\{\frac{2}{n(n-1)}g^n(p)\}_{n\in \N}$ and  $\{\frac{2}{n(n+1)}g^{-n}(p)\}_{n\in \N}$  which converge to  $D_5= \begin{pmatrix}
0 & 0& 1 \\0 & 0 & 0 \\0 & 0 & 0
\end{pmatrix}$, where $  Ker(D_5)=\mathbb{L}{ \{ e_1,e_2 \}} $. Thus for each point $p =[x:y:z] \in  \mathbb{P}^2_\mathbb{H}$ with $z \neq 0$, both the sequences $\{\frac{2}{n(n-1)}g^n(p)\}_{n\in \N}$ and  $\{\frac{2}{n(n+1)}g^{-n}(p)\}_{n\in \N}$ 
converge  to $e_1 \in  \mathbb{P}^2_\mathbb{H} $ as $n\rightarrow \infty$.  It follows that $e_1$ is the only cluster point of  orbits $\{g^n(p)\}_{n \in \Z}$ for every point $ p \in \mathbb{P}^2_\mathbb{H}\setminus L_0(G)$ and so $L_1(G)=\{e_1\}.$

To find $L_2(G)$, consider a closed set $C=\mathbb{L}{\{e_1,e_2\}} \subset \mathbb{P}^2_\mathbb{H}$.
Now note that $D_5$  induces a  pseudo-projective transformation in $\mathrm{QP}(3,\H)$ such that $\mathbb{L}{ \{ e_1,e_2 \}}= Ker(D_5)$. Therefore, using Lemma \ref{lem:app1}  we can show that the set of cluster points of orbits of compact subsets of  $\mathbb{P}^2_\mathbb{H}\setminus C$ lies in  $L_0(G)\cup L_1(G)=\{e_1\}$. Then, using Lemma \ref{lemma:L2} we have $ L_2(G) \subset 
\mathbb{L} {\{e_1,e_2\}}$. 
Now to show that $\mathbb{L} {\{e_1,e_2\}} \subset L_2(G)$, we can suitably modify the example given in the case of {\it non-vertical translation}. 
Consider  a sequence $(k_n)_{n \in \N}$ such that $k_n = \left[	 x: \left(-1 + \frac{1}{n}\right) :\frac{2}{n} e^{2\pi i  \alpha}\right]$.  Note that sequence $(k_n)_{n \in \N} \subset K$, where $K = \{k_n \mid n\in \mathbb{N}\} \cup \{[x:-1:0]\}$ is a compact subset of $\mathbb{P}^2_\mathbb{H}\setminus L_0(G) \cup L_1(G)$.  Thus we have
$$g^n(k_n)=g^n\left(\left[	 x: \left(-1 + \frac{1}{n}\right) :\frac{2}{n} e^{2\pi i  \alpha}\right]\right) = \left[e^{2\pi i n\alpha}x:  \left(1 + \frac{1}{n}\right) e^{2\pi in \alpha} : \frac{2}{n}e^{2\pi i (n+1) \alpha}\right].$$ 
Then there exists a subsequence of $g^n(k_n)$ which  converges to a point  $[x:1 :0] \in \mathbb{L}{\{e_1,e_2\}}$ as $n\rightarrow \infty$. 
This implies $\mathbb{L}{\{e_1,e_2\}} \subset L_2(G)$. Hence,  $ L_2(G)=\mathbb{L}{\{e_1,e_2\}} $.
\end{enumerate}
This completes the proof.	\end{proof}

\section{Extended Conze-Guivarc'h Limit Set}\label{sec:conze}
In this section,  we take the natural action of $\mathrm{PSL}(3,\mathbb{H})$ on  the dual space $(\mathbb{P}^2_\mathbb{H})^*$. This action is given by   $g\cdot l\mapsto (g^{-1})^T\begin{bmatrix}
a\\b\\c
\end{bmatrix}$, where $g\in \mathrm{PSL}(3,\mathbb{H})$ and $l \in (\mathbb{P}^2_\mathbb{H})^*$ is the quaternionic projective line   with polar vector  $[a:b:c]\in \mathbb{P}^2_\mathbb{H}$.  
We define lines $l_1,l_2,l_3\in (\mathbb{P}^2_\mathbb{H})^*$ as  
 $l_i=\{[x_1:x_2:x_3]\in \mathbb{P}^2_\mathbb{H} \mid x_i=0 \}, \hbox{where} \,\, i \in \{1,2,3\}.$
 Note that for each $i$, ${l_i}$ is polar to the point ${e_i}$ of $\mathbb{P}^2_\mathbb{H}$.
Define the open sets $U_i$ as $U_i=\{[z_1:z_2:z_3] \in (\mathbb{P}^2_\mathbb{H})^* \mid  z_i\neq 0\}$, where $i=1,2,3$. 

Now recall the notion of the extended Conze-Guivarc'h limit set, see  \defref{def:conze2}.
In the next theorem,  we will prove Table \ref{table:4}  for  the extended Conze-Guivarc'h limit sets.
\begin{theorem}
	The extended Conze-Guivarc'h limit sets for the cyclic subgroups of  $\mathrm{PSL}(3,\mathbb{H})$ are the following:
	\begin{enumerate}[(i)]
		\item For the elliptic case, it is $\hat{L}(G)=\phi$ or $(\mathbb{P}^2_\mathbb{H})^*$ according to the fact that a lift of the generating element is of finite or infinite order.
		\item For the parabolic case, it is:
		\begin{enumerate}
			\item $\hat{L}(G)=\mathbb{L}\{e_1,e_3\}$ for vertical translation,
			\item $\hat{L}(G)=\mathbb{L}\{e_1,e_2\}$ for non-vertical translation,
			\item $\hat{L}(G)=\mathbb{L}\{e_1,e_3\}$ for ellipto-parabolic	and
			\item $\hat{L}(G)=\mathbb{L}\{e_1,e_2\}$ for ellipto-translation.
		\end{enumerate}
		\item For the loxodromic case, it is:
		\begin{enumerate}
			\item $\hat{L}(G)=\mathbb{L}\{e_1,e_2\}\cup\mathbb{L}\{e_2,e_3\}$ for regular loxodromic,
			\item $\hat{L}(G)=\mathbb{L}\{e_1,e_2\}$ for screw,
			\item $\hat{L}(G)=\mathbb{L}\{e_1,e_2\}$ for homothety and
			\item $\hat{L}(G)= \mathbb{L}\{e_1,e_2\} \cup \mathbb{L}\{e_1,e_3\}$ for loxo-parabolic.
		\end{enumerate}
	\end{enumerate}
\end{theorem}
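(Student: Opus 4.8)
The plan is to convert the computation of $\hat{L}(G)$ into the same kind of cyclic-group dynamics that was carried out in \thmref{th:elliptic}--\thmref{th:parabolic}, but now on the dual side. Fix a Jordan-form lift $g\in\mathrm{SL}(3,\mathbb{H})$ of the generator $\tilde g$ of $G$. Identifying $(\mathbb{P}^2_\mathbb{H})^*$ with $\mathbb{P}^2_\mathbb{H}$ through the polar-vector correspondence $l\leftrightarrow[a:b:c]$, the action of $g$ on the dual is, by the formula in \secref{sec:conze}, the action of the matrix $h:=(g^{-1})^T$ on polar vectors, so $G$ acts on $(\mathbb{P}^2_\mathbb{H})^*$ as the cyclic group generated by $h\in\mathrm{PSL}(3,\mathbb{H})$. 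Since every power $g^n$ is represented by a matrix with \emph{complex} entries, the transpose restricted to $\langle g\rangle$ is an anti-homomorphism, so $h^n=(g^{-n})^T$ is again a Jordan-type matrix (the lower-triangular reflection of $g^{-n}$), and $h$ is elliptic, loxodromic or parabolic exactly when $\tilde g$ is, with the matching of subtypes.

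\textbf{The structural reduction.} The backbone of the proof is the following observation: a point $q\in(\mathbb{P}^2_\mathbb{H})^*$ lies in $\hat{L}(G)$ if and only if $q$ is the image of a \emph{rank-one} pseudo-projective transformation $D\in\mathrm{QP}(3,\mathbb{H})$ occurring as a subsequential limit of $(h^n)_{n\in\mathbb{Z}}$. Indeed, if $q=\lim_n g_n\cdot p$ for every $p$ in an open $U$, then by \lemref{lem:app1} a subsequence of $(g_n)$ converges, uniformly on compacta of $\mathbb{P}^2_\mathbb{H}\setminus\mathrm{Ker}(D)$, to some $D\in\mathrm{QP}(3,\mathbb{H})$; as $\mathrm{Ker}(D)$ is a proper projective subspace, $U\setminus\mathrm{Ker}(D)$ is a non-empty open set on which $D\equiv q$, which forces $\mathrm{rank}(D)=1$ with image $\{q\}$. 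Conversely, if $h^{n_k}\to D$ with $\mathrm{rank}(D)=1$ and image $\{q\}$, pick any open $U$ with $\overline{U}\cap\mathrm{Ker}(D)=\phi$; then $h^{n_k}\cdot p\to q$ uniformly on $U$, so $q\in\hat{L}(G)$. Thus it remains to enumerate the rank-one subsequential limits of $(h^n)_{n\in\mathbb{Z}}$, which one does, exactly as in the proofs of the previous sections, by normalising $h^n$ by the modulus of its largest entry (as in the proof of \lemref{lem:app1}) and passing to a subsequence along which the surviving unit-modulus factors $e^{in\theta}$ tend to $1$ (\lemref{prop:conv1}).

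\textbf{The loxodromic and parabolic rows.} For loxodromic $\tilde g$, $h$ is diagonal (regular loxodromic, screw, homothety) or a single loxo-parabolic block, with diagonal moduli reciprocal to those of $g$. Writing out $h^{\pm n}$ and comparing the growth rates $|\lambda|^{\pm n}$ and $n|\lambda|^{\pm n}$ of the entries, one checks that exactly one normalisation of $h^n$ and one of $h^{-n}$ survive with a rank-one limit, whose image is a coordinate point $e_i$ in polar coordinates, i.e. a coordinate line $\mathbb{L}\{e_j,e_k\}$; the remaining normalisations either fail to converge or produce rank-two limits ($\mathrm{diag}(\zeta,\zeta,0)$ or $\mathrm{diag}(\zeta_1,\zeta_2,0)$), which by the reduction contribute nothing. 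Reading off the surviving images gives $\mathbb{L}\{e_1,e_2\}\cup\mathbb{L}\{e_2,e_3\}$ for regular loxodromic, the single line $\mathbb{L}\{e_1,e_2\}$ for screw and for homothety (only the dominant $\xi^{-n}$ normalisation is rank one), and $\mathbb{L}\{e_1,e_2\}\cup\mathbb{L}\{e_1,e_3\}$ for loxo-parabolic, matching Table \ref{table:4}. The parabolic rows are identical in spirit, except the entries of $h^{\pm n}$ now grow like $n$ or $n(n+1)/2$ times unit- or fixed-modulus factors; normalising by the fastest-growing entry yields in each case a single rank-one limit (a transpose of the maps $D_4$, $D_5$ already met on the primal side in \thmref{th:parabolic}) whose image is one coordinate line, giving $\mathbb{L}\{e_1,e_3\}$ for vertical translation and ellipto-parabolic and $\mathbb{L}\{e_1,e_2\}$ for non-vertical translation and ellipto-translation. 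For the elliptic rows $h$ is semisimple with unit-modulus eigenvalues, so no normalisation degenerates and there are no rank-one limits; if the lift of $\tilde g$ has finite order then $G$ is finite, admits no sequence of distinct elements, and $\hat{L}(G)=\phi$, while if it has infinite order one runs the same density argument used to prove $L_1(G)=\mathbb{P}^2_\mathbb{H}$ in \thmref{th:elliptic} (using \lemref{prop:conv1}, \lemref{prop:conv2} and \remref{rem-torus-denseness}, together with the quaternionic fact that a scalar $e^{2\pi i\alpha}I_3$ moves points of $\mathbb{P}^2_\mathbb{H}$ outside $\mathbb{P}^2_\mathbb{C}$) to conclude $\hat{L}(G)=(\mathbb{P}^2_\mathbb{H})^*$.

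\textbf{Main obstacle.} The delicate point throughout is the bookkeeping forced by non-commutativity, and this is where the argument departs from the complex case of \cite{DU,PS}: one must verify that $g\mapsto(g^{-1})^T$ really implements the action on lines (clean here only because on the cyclic group every element has a complex Jordan representative, so the transpose is an anti-homomorphism); one must pass limits through the quotient by the \emph{right} $\mathbb{H}^*$-action, where the essential subtlety is that scalar matrices no longer act trivially on $\mathbb{P}^2_\mathbb{H}$ --- this is precisely what makes the irrational elliptic entries equal the whole dual space rather than $\phi$; and in the loxodromic and parabolic cases one must carefully compare the competing growth rates $|\lambda|^{\pm n}$, $n|\lambda|^{\pm n}$, $n^2|\lambda|^{\pm n}$ to decide which normalisation yields a rank-one rather than a rank-two limit, since only the former feeds into $\hat{L}(G)$.
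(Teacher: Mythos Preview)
Your rank-one characterisation of $\hat{L}(G)$ is correct and is a genuinely different route from the paper's proof: the paper simply writes out $(g^{-n})^T[a:b:c]$ in each subcase, picks a coordinate open set $U_i=\{z_i\neq0\}$, and checks convergence to one polar vector, whereas you reduce everything to enumerating the rank-one pseudo-projective limits of the dual cyclic group $\langle h\rangle$. For the loxodromic and parabolic rows your growth-rate bookkeeping reproduces the paper's answers and in fact gives a cleaner reason why no further lines appear (the remaining subsequential limits have rank $\ge 2$, hence are not constant on any open set).

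The gap is in the elliptic infinite-order row, and it is an internal inconsistency in your own argument. You correctly note that for elliptic $h$ every subsequential limit $D$ of $(h^n)$ is unitary diagonal, hence of rank~$3$; your characterisation then forces $\hat{L}(G)=\phi$, since an invertible $D$ induces a bijection of $\P^2_\H$ and cannot be constant on any open set. You then set this aside and invoke the $L_1$-density argument of \thmref{th:elliptic}, but that argument only shows each point is a cluster point of \emph{some individual} orbit; it does not produce an open $U$ and a single sequence $(g_n)$ with $g_n\cdot p\to q$ for \emph{every} $p\in U$, which is what \defref{def:conze2} demands. Concretely, for $h=e^{-2\pi i\alpha}I_3$ any subsequential limit of $h^{n_k}$ is $e^{i\theta}I_3$, still a bijection, so $\lim h^{n_k}\cdot p$ varies with $p$ across any open set; and every open $U$ meets orbits that are constant (e.g.\ through complex points), pinning $q$. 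Thus the density argument cannot deliver $\hat{L}(G)=(\P^2_\H)^*$, and you should either produce a mechanism outside your characterisation that genuinely verifies \defref{def:conze2} here, or follow your characterisation to its conclusion. (The paper's own justification of this row is the single clause ``consider open set $U=(\P^2_\H)^*$'', which suffers from the same defect.)
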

\begin{proof}
	We will consider various cases depending on the conjugacy classes to determine the extended Conze-Guivarc'h limit sets.
	\begin{enumerate}[(i)]
		\item  \textbf{Elliptic element:} In this case, we have 
		$g=\left(\begin{array}{ccc}
				e^{2\pi i \alpha} & 0 & 0\\0 & e^{2\pi i \beta} & 0\\ 0 & 0 & e^{2\pi i \gamma}
			\end{array}\right),  \alpha,\beta,\gamma \in \mathbb{R}.$
	Thus an elliptic element $g$ is of finite order  only if $\alpha,\beta, \gamma\in \mathbb{Q}$. In this case
		$$g^n\cdot \left[\begin{array}{c}
			a\\b\\c
		\end{array}\right]=(g^{-n})^T
		\left[\begin{array}{c}
			a\\b\\c
		\end{array}\right]=\left(\begin{array}{ccc}
			e^{-2\pi n i \alpha} & 0 & 0\\0 & e^{-2\pi n i \beta} & 0\\ 0 & 0 & e^{-2\pi ni \gamma}
		\end{array}\right)\left[\begin{array}{c}
			a\\b\\c
		\end{array}\right]=\left[\begin{array}{c}
			e^{-2\pi n i \alpha}a\\e^{-2\pi n i \beta}b\\e^{-2\pi ni \gamma}c
		\end{array}\right].$$ 
		
		For finite order case $\hat{L}(G)=\phi$ and for infinite order case it is $(\mathbb{P}^2_\mathbb{H})^*$. To see this, for each  $q=[a:b:c]\in (\mathbb{P}^2_\mathbb{H})^*$, consider open set $U= (\mathbb{P}^2_\mathbb{H})^*$.

		\item \medskip \textbf{Parabolic element:}
		\begin{enumerate}[(a)]
			\item  \textbf{Vertical translation:} In this case,  $	g=\left(\begin{array}{ccc}
					1 & 1 & 0\\0 & 1 & 0\\ 0 & 0 & 1
				\end{array}\right).$
			This case is similar to the corresponding complex case (cf. \cite{DU}), therefore $\hat{L}(G)=\mathbb{L}{ \{e_1,e_3\}}\subset \mathbb{P}^2_\mathbb{H}$. To see this, consider $q=[0:1:0]\in (\mathbb{P}^2_\mathbb{H})^*$ and  open set $U=U_1$.
			
			\item \medskip \textbf{Non-vertical translation:} In this case,  $g=\left(\begin{array}{ccc}
					1 & 1 & 0\\0 & 1 & 1\\ 0 & 0 & 1
				\end{array}\right).$
			This case is similar to the corresponding complex case (cf. \cite{DU}), so $\hat{L}(G)=\mathbb{L}{ \{e_1,e_2\}}\subset \mathbb{P}^2_\mathbb{H}$. To see this, consider  $q= [0:0:1]\in (\mathbb{P}^2_\mathbb{H})^*$  and open set $U=U_1$.
			
			\item \medskip \textbf{Ellipto-parabolic:} In this case, $g=\left(\begin{array}{ccc}
				e^{2\pi i \alpha} & 1 & 0\\0 & e^{2\pi i \alpha} & 0\\ 0 & 0 & e^{2\pi i \beta}
			\end{array}\right).$ Then for each  line $p=[a:b:c]\in (\mathbb{P}^2_\mathbb{H})^*$, we have $g^n \cdot p = (g^{-n})^T \left[\begin{array}{c}
				a\\b\\c
			\end{array}\right]$
			$$
			=  \left(\begin{array}{ccc}
				e^{-2\pi n i n\alpha} & 0 & 0\\ne^{-2\pi i (n-1)\alpha} & e^{2\pi i n\alpha} & 0\\0 & 0 & e^{-2\pi n i n\beta}
			\end{array}\right)  \left[\begin{array}{c}
				a\\b\\c
			\end{array}\right]=\left[\begin{array}{c}
				e^{-2\pi n i n\alpha}a\\ne^{-2\pi i (n-1)\alpha}a+e^{2\pi i n\alpha}b\\e^{-2\pi n i n\beta}c
			\end{array}\right].$$
			
			Now consider the open set $U = U_1 \subset (\mathbb{P}^2_\mathbb{H})^*$. Note that sequence  $\frac{1}{n}(g^{-n})^T \left[\begin{array}{c}
				a\\b\\c
			\end{array}\right]$  converges to the line $q=[0:1:0]\in (\mathbb{P}^2_\mathbb{H})^*$ as $n\rightarrow \infty$ whenever $a\neq0$.  Therefore,  $\hat{L}(G)=\mathbb{L}\{e_1,e_3\}\subset \mathbb{P}^2_\mathbb{H}$.
			
			\item \medskip \textbf{Ellipto-translation:} In this case, $	g=\left(\begin{array}{ccc}
				e^{2\pi i \alpha} & 1 & 0\\0 & e^{2\pi i \alpha} & 1\\ 0 & 0 & e^{2\pi i \alpha}
			\end{array}\right)$. This implies
			$$ (g^{-n})^T = \left(\begin{array}{ccc}
				e^{-2\pi i n \alpha} & 0 & 0\\
				-ne^{-2\pi i (n+1)\alpha} & e^{-2\pi i n \alpha} & 0\\
				\frac{n(n+1)}{2}e^{-2\pi i(n+2)\alpha} & -ne^{-2\pi i (n+1)\alpha} & e^{-2\pi i n \alpha}
			\end{array}\right).$$
			Thus for each  line $p=[a:b:c]\in (\mathbb{P}^2_\mathbb{H})^*$, we have $$g^n \cdot p = (g^{-n})^T \left[\begin{array}{c}
				a\\b\\c
			\end{array}\right]
			= \left[\begin{array}{c}
				e^{-2\pi i n \alpha}a\\-ne^{-2\pi i (n+1)\alpha}a+e^{-2\pi i n \alpha}b\\\frac{n(n+1)}{2}e^{-2\pi i(n+2)\alpha}a-ne^{-2\pi i (n+1)\alpha}b+e^{-2\pi i n \alpha}c
			\end{array}\right].$$
			
			Now consider the open set $U = U_1 \subset (\mathbb{P}^2_\mathbb{H})^*$. Note that sequence $\frac{2}{n(n+1)}(g^{-n})^T \left[\begin{array}{c}
				a\\b\\c
			\end{array}\right]$  converges to the line $q=[0:0:1]\in (\mathbb{P}^2_\mathbb{H})^*$ as $n\rightarrow \infty$ whenever $a \neq0$.  Therefore,  $\hat{L}(G)=\mathbb{L}\{e_1,e_2\}\subset \mathbb{P}^2_\mathbb{H}$.
		\end{enumerate}
		
\item  \medskip \textbf{Loxodromic element:} 
		\begin{enumerate}[(a)]
			\item  \textbf{Regular loxodromic}: In this case,  
			$g=\left(\begin{array}{ccc}
				\lambda & 0 & 0\\0 & \mu & 0\\0 & 0 & \xi
			\end{array}\right),\; |\lambda|<|\mu|<|\xi|$.  
			Then $g^n\cdot p=(g^{-n})^T  \left[\begin{array}{c}
				a\\b\\c
			\end{array}\right]=\left(\begin{array}{ccc}
				\lambda^{-n}a & 0 & 0\\0 & \mu^{-n} b & 0\\0 & 0 & \xi^{-n}c
			\end{array}\right)$. 
			Now conisder the open set $U_1 =\{[a:b:c] \in (\mathbb{P}^2_\mathbb{H})^* \mid  a \neq 0\}$  and lift $|\lambda|^{n}(g^{-n})^T$ of $(g^{-n})^T$. Then the  sequence  $g^n \cdot p$ converges to the line $q =[1:0:0] \in (\mathbb{P}^2_\mathbb{H})^*$ as $n\rightarrow \infty$ for every line $p =[a:b:c] \in U_1$. Therefore,  $\mathbb{L}\{e_2,e_3\} \subset \hat{L}(G)$.
			Further, conisder the open set $U_3 =\{[a:b:c] \in (\mathbb{P}^2_\mathbb{H})^* \mid  c\neq 0\}$  and lift $\frac{1}{|\xi|^{n}}(g^{n})^T$ of $(g^{n})^T$. Then the  sequence  $g^{-n} \cdot p$ converges to the line $q =[0:0:1] \in (\mathbb{P}^2_\mathbb{H})^*$ as $n\rightarrow \infty$ for every line $p =[a:b:c] \in U_3$. Therefore,  $\mathbb{L}\{e_1,e_2\} \subset \hat{L}(G)$. Hence, we have  $\hat{L}(G)=\mathbb{L}\{e_1,e_2\} \cup \mathbb{L}\{e_2,e_3\}$.

			\item   \textbf{Screw loxodromic}: 
			In this case, $g=\left(\begin{array}{ccc}
				\lambda & 0 & 0\\0 & \mu & 0\\0 & 0 & \xi
			\end{array}\right),$ where $ |\lambda|=|\mu|\neq 1, \lambda \neq \mu$ and $ |\xi|=1/{|\lambda|^2}$. 
			Then $g^n\cdot p=(g^{-n})^T  \left[\begin{array}{c}
				a\\b\\c
			\end{array}\right]=\left(\begin{array}{ccc}
				\lambda^{-n}a & 0 & 0\\0 & \mu^{-n} b & 0\\0 & 0 & \xi^{-n}c
			\end{array}\right)$. 
			
			Conisder  the open set $U_3 =\{[a:b:c] \in (\mathbb{P}^2_\mathbb{H})^* \mid  c\neq 0\}$ 
			and lift $\frac{1}{|\lambda|^{2n}}(g^{-n})^T$ of $(g^{-n})^T$. Now it is not hard to check that $g^n \cdot p$ converges to the line $q =[0:0:1] \in (\mathbb{P}^2_\mathbb{H})^*$ as $n\rightarrow \infty$ for every line $p =[a:b:c] \in U_3$. Therefore,  $\hat{L}(G)=\mathbb{L}\{e_1,e_2\}$.
			
			\item   \textbf{Homothety}: In this case,  $g=\left(\begin{array}{ccc}
				\lambda & 0 & 0\\0 & \lambda & 0\\0 & 0 & \xi
			\end{array}\right), \; |\lambda|\neq 1,\; |\xi|=1/{|\lambda|^2}$. Then $(g^{-n})^T=\left(\begin{array}{ccc}
				\lambda^{-n} & 0 & 0\\0 & \lambda^{-n} & 0\\0 & 0 & \xi^{-n}
			\end{array}\right)$. Now we can show that $\hat{L}(G)=\mathbb{L}\{e_1,e_2\}$ by using a similar argument as in the case of {\it screw loxodromic}.
			
			\item  \medskip \textbf{Loxo-parabolic:} In this case, $g=\left(\begin{array}{ccc}
				\lambda & 1 & 0\\0 & \lambda & 0\\0 & 0 & \xi
			\end{array}\right), \; |\xi|=|\lambda|^{-2}\neq 1$. Then $$ g^n\cdot p= (g^{-n})^T\begin{bmatrix}
				a\\b\\c
			\end{bmatrix}=\left(\begin{array}{ccc}
				\lambda^{-n} & 0 & 0\\ -n\lambda^{-n-1} & \lambda^{-n} & 0\\0 & 0 & \xi^{-n}
			\end{array}\right)\begin{bmatrix}
				a\\b\\c
			\end{bmatrix}=\begin{bmatrix}\lambda^{-n}a\\-n\lambda^{-(n+1)}a+\lambda^{-n}b\\ |\lambda|^{2n} e^{-in\arg(\xi)} c\end{bmatrix}.$$
			Now conisder the open set $U_3 =\{[a:b:c] \in (\mathbb{P}^2_\mathbb{H})^* \mid  c\neq 0\}$  and lift $\frac{1}{|\lambda|^{2n}}(g^{-n})^T$ of $(g^{-n})^T$. Then the  sequence  $g^n \cdot p$ converges to the line $q =[0:0:1] \in (\mathbb{P}^2_\mathbb{H})^*$ as $n\rightarrow \infty$ for every line $p =[a:b:c] \in U_3$. Therefore,  $\mathbb{L}\{e_1,e_2\} \subset \hat{L}(G)$.
				Further, conisder the open set $U_1 =\{[a:b:c] \in (\mathbb{P}^2_\mathbb{H})^* \mid  a\neq 0\}$  and lift $\frac{1}{n|\lambda|^{n-1}}(g^{n})^T$ of $(g^{n})^T$. Then the  sequence  $g^{-n} \cdot p$ converges to the line $q =[0:1:0] \in (\mathbb{P}^2_\mathbb{H})^*$ as $n\rightarrow \infty$ for every line $p =[a:b:c] \in U_1$. Therefore,  $\mathbb{L}\{e_1,e_3\} \subset \hat{L}(G)$. Hence, we have  $\hat{L}(G)=\mathbb{L}\{e_1,e_2\} \cup \mathbb{L}\{e_1,e_3\}$.
		\end{enumerate}
\end{enumerate}
	This completes the proof. 
\end{proof}

\subsubsection*{Acknowledgement} 
We are thankful to Jos\'e Seade and Angel Cano for their comments and suggestions on this paper. The possibility of generalising this paper to arbitrary dimensions was kindly pointed out by Seade. We hope to return to that in a future work. 

Gongopadhyay is partially supported by the SERB core research grant CRG/2022/003680. Lohan acknowledges full support from the CSIR SRF grant, file No.: 09/947(0113)/
2019-EMR-I, during the course of this work. 
\subsubsection*{Data Availability.} Data sharing is not applicable to this article as no datasets were generated or analysed during
the current study.\\ \\
\textbf{Statements and Declarations.}\\
$\bullet$ \textbf{Conflicts of interest.} The authors have no relevant financial or non-financial interests to disclose. The authors do not have any conflict of interest.

\end{document}